%
\input ./style/arxiv-vmsta.cfg
\documentclass[numbers,compress,v1.0.1]{vmsta}
\usepackage{vtexbibtags}

\usepackage{bbm}

\volume{6}
\issue{1}
\pubyear{2019}
\firstpage{13}
\lastpage{39}
\aid{VMSTA126}
\doi{10.15559/18-VMSTA126}
\articletype{research-article}



\startlocaldefs

\newcommand{\rrvert}{\vert}
\newcommand{\llvert}{\vert}
\urlstyle{rm}
\allowdisplaybreaks

\newtheorem{theorem}{Theorem}[section]
\newtheorem{corollary}{Corollary}[section]
\newtheorem{lemma}{Lemma}[section]

\theoremstyle{definition}
\newtheorem{definition}{Definition}[section]
\newtheorem{remark}{Remark}[section]

\DeclareMathOperator*\lowlim{\underline{lim}}

\newcommand{\Y}{\ensuremath{Y_{\varepsilon}}}
\newcommand{\Yn}{\ensuremath{Y_{\varepsilon_n}}}
\newcommand{\fr}[1]{\ensuremath{\frac{ k}{\Y(#1) \mathbbm{1}_{\{\Y(#1)>0\}} + \varepsilon}}}
\newcommand{\frn}[1]{\ensuremath{\frac{k }{\Yn(#1)\mathbbm{1}_{\{\Yn(#1)>0\}} +\varepsilon_n}}}
\newcommand{\B}[1]{\ensuremath{B^H(#1)}}

\newcommand{\E}{\ensuremath{\mathbb E}}

\hyphenation{de-si-de-rium}

\endlocaldefs

\begin{document}

\begin{frontmatter}
\pretitle{Research Article}

\title{Fractional Cox--Ingersoll--Ross process with small Hurst indices}

\author{\inits{Yu.} \fnms{Yuliya} \snm{Mishura}\ead[label=e1]{myus@univ.kiev.ua}}
\author{\inits{A.} \fnms{Anton} \snm{Yurchenko-Tytarenko}\thanksref{cor1}\ead[label=e2]{antonyurty@gmail.com}}
\thankstext[type=corresp,id=cor1]{Corresponding author.}
\address{Faculty of Mechanics and Mathematics,
\institution{Taras Shevchenko National University~of~Kyiv}, Volodymyrska St., 64/13, Kyiv 01601, \cny{Ukraine}}


\markboth{Yu. Mishura, A. Yurchenko-Tytarenko}{Fractional
Cox--Ingersoll--Ross process with small Hurst indices}

\begin{abstract}
In this paper the fractional Cox--Ingersoll--Ross process on
$\mathbb R_+$ for $H<1/2$ is defined as a square of a pointwise limit of the processes
$\Y$, satisfying the SDE of the form $d\Y(t) =  (\fr{t}-a\Y(t) )dt
+\sigma dB^H(t)$, as $\varepsilon\downarrow0$. Properties of such limit
process are considered. SDE for both the limit process and the fractional
Cox--Ingersoll--Ross process are obtained.
\end{abstract}
\begin{keywords}
\kwd{Fractional Cox--Ingersoll--Ross process}
\kwd{fractional Brownian motion}
\kwd{stochastic differential equation}
\kwd{pathwise Stratonovich integral}
\end{keywords}
\begin{keywords}[MSC2010]%
\kwd{60G22}
\kwd{60H05}
\kwd{60H10}
\end{keywords}

\received{\sday{27} \smonth{8} \syear{2018}}
\revised{\sday{3} \smonth{12} \syear{2018}}
\accepted{\sday{3} \smonth{12} \syear{2018}}
\publishedonline{\sday{21} \smonth{12} \syear{2018}}
\end{frontmatter}

\section*{Introduction}

The Cox--Ingersoll--Ross (CIR) process, that was first
introduced and studied by Cox, Ingersoll and Ross in papers \cite{CIR1,CIR2,CIR3},
can be defined as the process \mbox{$X=\{X_t, t\ge0\}$} that
satisfies the stochastic differential equation of the form
\begin{equation}
\label{std cir intro 1} dX_t = a(b-X_t)dt +\sigma
\sqrt{X_t}dW_t, \quad X_0, a, b, \sigma>0,
\end{equation}
where $W=\{W_t, t\ge0\}$ is the Wiener process.

The CIR process\index{CIR process} was originally proposed as a model for interest rate
evolution in time and in this framework the parameters $a$, $b$ and
$\sigma$ have the interpretation as follows: $b$ is the ``mean'' that
characterizes the level, around which the trajectories of $X$ will
evolve in a long run; $a$ is ``the speed of adjustment'' that
measures the velocity of regrouping around $b$; $\sigma$ is an
instantaneous volatility which shows the amplitude of randomness
entering the system. Another common application is the stochastic
volatility modeling in the Heston model which was proposed in \cite
{Hest} (an extensive bibliography on the subject can be found in \cite
{KIM} and \cite{KIMM}).

For the sake of simplicity, we shall use another parametrization of the
equation \eqref{std cir intro 1}, namely
\begin{equation}
\label{std cir intro 2} dX_t = (k-aX_t)dt +\sigma
\sqrt{X_t}dW_t, \quad X_0, k, a, \sigma>0.
\end{equation}

According to \cite{feller}, if the condition $2k \ge\sigma^2$,
sometimes referred to as the Feller condition, holds, then the CIR
process\index{CIR process} is strictly positive. It is also well-known that this process
is ergodic and has a stationary distribution.

However, in reality the dynamics on financial markets is characterized
by the so-called ``memory phenomenon'', which cannot be reflected by
the standard CIR model\index{standard CIR model} (for more details on financial markets with
memory, see \cite{AI,BM,DGE,YMHBS}). Therefore, it is reasonable to
introduce a fractional Cox--Ingersoll--Ross process, i.e. to modify the equation $\eqref{std cir intro 2}$ by replacing (in
some sense) the standard Wiener process with the fractional Brownian
motion\index{fractional Brownian motion} $B^H = \{B_t^H, t\ge0\}$, i.e. with a centered Gaussian process
with the covariance function $\mathbb E  [ B_t^H B_s^H ]
=~\frac{1}{2} (t^{2H}+s^{2H}-|t-s|^{2H} )$.

There are several approaches to the definition of the fractional
Cox--Ingersoll--Ross process. The paper \cite{Mar} introduces the
so-called ``rough-path'' approach; in \cite{LMS1,LMS2} it is defined
as a time-changed standard CIR process\index{CIR process} with inverse stable
subordinator; another definition is presented in \cite{ER} as a part of
discussion on rough Heston models.

A simpler pathwise approach is presented in \cite{MPRYT} and \cite
{MYT}. There, the fractional Cox--Ingersoll--Ross process was defined as
the square of the solution of the SDE
\begin{equation}
\label{Y intro} dY_t = \frac{1}{2} \biggl(\frac{k}{Y_t}-aY_t
\biggr)dt +\frac{\sigma
}{2}dB_t^H, \quad
Y_0>0,
\end{equation}
until the first moment of zero hitting and zero after this moment (the
latter condition was necessary as in the case of $k>0$ the existence of
the solution of \eqref{Y intro} could not be guaranteed after the first
moment of reaching zero).

The reason of such definition lies in the fact that the fractional
Cox--Ingersoll--Ross process $X$ defined in that way satisfies pathwisely
(until the first moment of zero hitting) the equation
\begin{equation*}
dX_t = (k-aX_t)dt + \sigma\sqrt{X_t}\circ
dB_t^H, \quad X_0 = Y_0^2>0,
\end{equation*}
where the integral with respect to the fractional Brownian motion\index{fractional Brownian motion} is
considered as the pathwise Stratonovich integral.\index{pathwise Stratonovich integral}

It was shown that if $k>0$ and $H>\frac{1}{2}$, such process is
strictly positive and never hits zero; if $k>0$ and $H<\frac{1}{2}$,
the probability that there is no zero hitting on the fixed interval
$[0,T]$, $T>0$, tends to 1 as $k\to\infty$.

The special case of $k=0$ was considered in \cite{MPRYT}. In this
situation $Y$ is the well-known fractional Ornstein--Uhlenbeck process
(for more details see \cite{CKM}) and, if $a\ge0$, the fractional
Cox--Ingersoll--Ross process hits zero almost surely, and if $a<0$ the
probability of hitting zero is greater than 0 but less than 1.

However, such a definition has a significant disadvantage: according to
it, the process remains on the zero level after reaching the latter,
and if $H<1/2$, such case cannot be excluded. In this paper we
generalize the approach presented in \cite{MYT} and \cite{MPRYT} in
order to solve this issue.

We define the fractional CIR process\index{fractional CIR process} on $\mathbb R_+$ for $H<1/2$ as
the square of $Y$ which is the pointwise limit\index{pointwise limit} as $\varepsilon
\downarrow0$ of the processes $\Y$ that satisfy the SDE of the
following type:
\begin{equation*}
d\Y(t) = \frac{1}{2} \biggl(\fr{s} - a\Y(t) \biggr)dt +\frac{\sigma}{2}
dB^H(t), \quad\Y(0) = Y_0>0,
\end{equation*}
where $a$, $k$, $\sigma>0$.

We prove that this limit indeed exists, is nonnegative a.s. and is
positive a.e. with respect to the Lebesgue measure a.s. Moreover, $Y$
is continuous and satisfies the equation of the type
\begin{equation*}
\begin{gathered} Y(t) = Y(\alpha) + \frac{1}{2}\int
_\alpha^t \biggl(\frac{k}{Y(s)} - aY(s) \biggr)ds
+ \frac{\sigma}{2} \bigl(B^H(t) - B^H(\alpha) \bigr)
\end{gathered} %
\end{equation*}
for all $t\in[\alpha,\beta]$ where $(\alpha, \beta)$ is any interval of
$Y$'s positiveness.\index{positiveness}

The possibility of getting the equation of the form \eqref{Y intro} is
restricted due to the obscure structure\index{obscure structure} of the set $\{t\ge0~|~Y(t) = 0\}
$ which is connected to the structure of the level sets of the
fractional Brownian motion.\index{fractional Brownian motion} For some results on the latter see, for
example, \cite{Muk}.

This paper is organised as follows.

In Section \ref{section 1} we give preliminary remarks on some results
concerning the existence and uniqueness of solutions of SDEs driven by
an additive fractional Brownian motion,\index{fractional Brownian motion} as well as explain connection of
this paper to \cite{MPRYT} and \cite{MYT}.

In Section \ref{section 2} we construct the square root process as the
limit of approximations. In particular, it contains a variant of the
comparison Lemma and a uniform boundary for all moments of the prelimit
processes.

In Section \ref{section 3} we consider properties of the square root
process. We prove that $Y$ is nonnegative and positive a.e.,
and also continuous.

In Section \ref{section 4} we give some remarks concerning the equation
for the limit square root process on $\mathbb R_+$. We obtain the
equation for this process with the noise in form of sum of increments
of the fractional Brownian motion\index{fractional Brownian motion} on the intervals of $Y$'s positiveness.\index{positiveness}

Section \ref{section 5} is fully devoted to the pathwise definition and
equation of the fractional Cox--Ingersoll--Ross process. We prove that on
each interval of positiveness\index{positiveness} the process satisfies the CIR SDE with
the integral with respect to a fractional Brownian motion,\index{fractional Brownian motion} considered as
the pathwise Stratonovich integral.\index{pathwise Stratonovich integral} The equation on an arbitrary finite
interval is also obtained, with the integral with respect to a fractional
Brownian motion\index{fractional Brownian motion} considered as the sum of pathwise Stratonovich
integrals\index{pathwise Stratonovich integral} on intervals of positiveness.\index{positiveness}\looseness=1

\section{Fractional Cox--Ingersoll--Ross process until the first moment
of zero hitting}\label{section 1}

Let $B^H = \{B^H(t), t\ge0\}$ be a fractional Brownian motion\index{fractional Brownian motion} with an
arbitrary Hurst index $H\in(0,1)$.\index{Hurst index} Consider the process $Y = \{
Y(t),~t\ge0\}$, such that
\begin{equation}
\label{Y} dY(t) = \frac{1}{2} \biggl(\frac{k}{Y(t)} - aY(t)
\biggr)dt + \frac{\sigma
}{2} dB^H(t), \quad t\ge0,\ Y_0,
a, k, \sigma>0.
\end{equation}

Note that, according to \cite{NOu}, the sufficient conditions that
guarantee the existence and the uniqueness of the strong solution of
the equation
\begin{equation*}
Z(t) = z + \int_0^t f\bigl(s, Z(s)\bigr)ds +
B^H(t),\quad  t\in[0,T],
\end{equation*}
are as follows:

(i) for $H<1/2$: \textit{linear growth condition}:
\begin{equation}
\label{(i)} \big|f(t,z)\big|\le C\bigl(1+|z|\bigr),
\end{equation}
where $C$ is a positive constant;

(ii) for $H>1/2$: \textit{H\"older continuity}:
\begin{equation}
\label{(ii)} \big|f(t_1,z_1) - f(t_2,
z_2)\big| \le C\bigl(|z_1-z_2|^\alpha+|t_1-t_2|^\gamma
\bigr),
\end{equation}
where $C$ is a positive constant, $z_1, z_2 \in\mathbb R$, $t_1, t_2
\in[0,T]$, $1>\alpha>1-1/2H$, $\gamma> H-\frac{1}{2}$.

The function
\begin{equation*}
f(y) = \frac{k}{y} - ay
\end{equation*}
satisfies both (i) and (ii) for all $y\in(\delta, +\infty)$, where
$\delta\in(0,Y_0)$ is arbitrary. Therefore for all $H\in(0,1)$ the
unique strong solution of \eqref{Y} on $[0,T]$ exists until the first
moment of hitting the level $\delta\in(0,Y_0)$ and, from the fact that
$\delta$ is arbitrary, it exists until the first moment of zero hitting.

Let $\tau:= \sup\{t>0~| ~\forall s\in[0,t): Y(s)>0\}$ be the first
moment of zero hitting by $Y$. The fractional Cox-Ingersoll-Ross
process was defined in \cite{MYT} as follows.

\begin{definition}\label{oldCIRdef}
The fractional Cox--Ingersoll--Ross (CIR) process is the process $X=\{
X(t),~t\ge0\}$, such that
\begin{equation}
\label{X} X(t)= Y^2(t)\mathbbm1_{\{t<\tau\}},
\end{equation}
where $Y$ is the solution of the equation \eqref{Y}.

\begin{remark}
Further, $Y$ will be sometimes referred to as the square root process,
as it is indeed a square root of the fractional CIR process.\index{fractional CIR process}
\end{remark}
\end{definition}

It is known (\cite{MYT}, Theorem 2) that if $H>\frac{1}{2}$, the
process $Y$ is strictly positive a.s., therefore the fractional
Cox--Ingersoll--Ross process is simply $Y^2(t)$, $t\ge0$.

However, in the case of $H<\frac{1}{2}$, the process $Y$ may hit zero.
In such situation, according to \eqref{X}, the corresponding
trajectories of the fractional Cox--Ingersoll--Ross process remain in
zero after this moment, which is an undesirable property for any
financial applications.

Our goal is to modify the definition of the fractional CIR process\index{fractional CIR process} in
order to remove the problem mentioned above.

\section{Construction of the square root process on $\mathbb R_+$ as a
limit of $\varepsilon$-approximations}\label{section 2}

Consider the process $\Y=\{\Y(t),t\in[0,T]\}$ that satisfies the
equation of the form
\begin{equation}
\label{new equation} \Y(t)=Y_0 +\int_0^t
\frac{k }{\Y(s)\mathbbm{1}_{\{\Y(s)>0\}}+\varepsilon
}ds - a\int_0^t\Y(s)ds+\sigma
\B{t}, \quad t\ge0,
\end{equation}
where $Y_0, k, a, \sigma>0$ and $B^H=\{\B{t},t\ge0\}$ is a fractional
Brownian motion\index{fractional Brownian motion} with $H\in(0,1/2)$.

\begin{remark}
We will sometimes call $\Y$ an $\varepsilon$-approximation of the
square-root process $Y$.
\end{remark}

For any $T>0$, the function $f_\varepsilon$: $\mathbb R \to\mathbb R$
such that
\begin{equation*}
f_\varepsilon(y)=\frac{k}{y\mathbbm{1}_{\{y>0\}}+\varepsilon} - ay
\end{equation*}
satisfies the conditions \eqref{(i)} and \eqref{(ii)}, therefore the
strong solution of \eqref{new equation} exists and is unique.

The goal of this section is to prove that there is a pointwise limit\index{pointwise limit} of
$\Y$ as $\varepsilon\to0$.

First, let us prove the analogue of the comparison Lemma.

\begin{lemma}\label{comparison} Assume that continuous random processes\index{continuous random processes}
$Y_1 = \{Y_1(t), t\ge0\}$ and $Y_2 = \{Y_2(t), t\ge0\}$ satisfy the
equations of the form
\begin{equation}
\label{comparison processes} %
\begin{gathered} Y_i(t) = Y_0
+\int_0^t f_i\bigl(Y_i(s)
\bigr) ds+\int_0^t \alpha_i(s)ds+
\sigma\B {t}, \quad t \ge0, \  i=1,2, \end{gathered} %
\end{equation}
where $B^H=\{\B t,t\ge0\}$ is a fractional Brownian motion,\index{fractional Brownian motion} $Y_0$,
$\sigma$>0 are constants, $\alpha_i = \{\alpha_i(t), t\ge0\}$,
$i=1,2$, are continuous random processes\index{random processes} and $f_1$, $f_2$: $\mathbb R\to
\mathbb R$ are continuous functions, such that:

(i) for all $y\in\mathbb R$: $f_1(y)<f_2(y)$;

(ii) for all $\omega\in\varOmega$, for all $t\ge0$: $\alpha_1 (t, \omega)
\le\alpha_2(t, \omega)$.

Then, for all $\omega\in\varOmega$, $t\ge0$: $Y_1(t,\omega)<Y_2(t,\omega)$.
\end{lemma}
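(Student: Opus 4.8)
The plan is to argue by contradiction using the continuity of the paths and the fact that the two equations share the \emph{same} additive noise $\sigma B^H$, so that the difference $D(t) := Y_2(t) - Y_1(t)$ is a pathwise differentiable-in-the-right-sense, in fact absolutely continuous, function of $t$ with no Gaussian part. First I would fix an arbitrary $\omega \in \varOmega$ (all statements are pathwise) and note that $D(0) = 0$. Subtracting the two integral equations \eqref{comparison processes} kills the $\sigma B^H(t)$ term, leaving
\begin{equation*}
D(t) = \int_0^t \bigl( f_2(Y_2(s)) - f_1(Y_1(s)) \bigr)\, ds + \int_0^t \bigl( \alpha_2(s) - \alpha_1(s) \bigr)\, ds .
\end{equation*}
Hence $D$ is continuous with $D(0)=0$, and since $f_2(Y_0) - f_1(Y_0) > 0$ by hypothesis (i) and $\alpha_2(0) - \alpha_1(0) \ge 0$ by (ii), the integrand is strictly positive at $s=0$, so $D(t) > 0$ for all small $t > 0$.

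Next I would suppose, for contradiction, that the set $\{ t \ge 0 : D(t) \le 0 \}$ is nonempty and let $t_0$ be its infimum. By the previous paragraph $t_0 > 0$, and by continuity of $D$ we have $D(t_0) = 0$ while $D(s) > 0$ for all $s \in (0, t_0)$; in particular $Y_1(s) < Y_2(s)$ on $(0,t_0)$ and $Y_1(t_0) = Y_2(t_0) =: y_*$. Now I evaluate the derivative of $D$ at $t_0$ from the left. Since $f_1, f_2$ are continuous and the $Y_i$ and $\alpha_i$ are continuous, the integrand in the displayed formula for $D$ is continuous in $s$, so $D$ is $C^1$ and
\begin{equation*}
D'(t_0) = f_2(Y_2(t_0)) - f_1(Y_1(t_0)) + \alpha_2(t_0) - \alpha_1(t_0) = f_2(y_*) - f_1(y_*) + \alpha_2(t_0) - \alpha_1(t_0) > 0
\end{equation*}
by (i) and (ii). But $D(t_0) = 0$ and $D(s) > 0$ for $s < t_0$ force $D'(t_0) \le 0$, a contradiction. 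Therefore $\{ t : D(t) \le 0 \}$ is empty, i.e. $Y_1(t,\omega) < Y_2(t,\omega)$ for all $t \ge 0$, and since $\omega$ was arbitrary the lemma follows.

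The only delicate point is the justification that $D$ is genuinely $C^1$ (or at least that it has a well-defined nonpositive left derivative at $t_0$): this rests on the continuity of $s \mapsto f_i(Y_i(s))$, which holds because $f_i$ is assumed continuous on all of $\mathbb{R}$ and $Y_i$ has continuous paths, together with the continuity of $\alpha_i$. One small subtlety worth spelling out is that we must not accidentally use strict inequality $f_1 < f_2$ only at $Y_0$; the hypothesis gives it at \emph{every} real argument, which is exactly what is needed at the (a priori unknown) value $y_*$. If one prefers to avoid invoking $C^1$-regularity of $D$, the same contradiction can be reached by a Gronwall-type estimate on $[0, t_0]$, but the differentiation argument above is the cleanest since there is no stochastic integral to worry about once the common noise cancels.
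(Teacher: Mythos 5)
Your argument is essentially the paper's own proof: cancel the common noise $\sigma B^H$, note that $D=Y_2-Y_1$ is $C^1$ with $D(0)=0$ and $D'(0)>0$ (hence $D>0$ for small $t>0$), and then obtain a contradiction from the sign of $D'$ at the first return to zero, exactly as the paper does with its $t^*$. One small correction: since $D(0)=0$, the set $\{t\ge 0 : D(t)\le 0\}$ always contains $0$, so you must take $t_0=\inf\{t>0 : D(t)\le 0\}$ (equivalently the paper's $t^*=\sup\{t>0~|~\forall s\in(0,t):D(s)>0\}$); with that fix your left-derivative contradiction at $t_0$ goes through verbatim.
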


\begin{proof} Let $\omega\in\varOmega$ be fixed (we will omit $\omega$ in
brackets in what follows).

Denote
\begin{equation*}
\begin{gathered} \delta(t)=Y_2(t)-Y_1(t)
\\
=\int_0^t \bigl(f_2
\bigl(Y_2(s)\bigr)-f_1\bigl(Y_1(s)\bigr)
\bigr)ds +\int_0^t \bigl(\alpha
_2(s)-\alpha_1(s) \bigr)ds, \quad t\ge0. \end{gathered}
\end{equation*}

The function $\delta$ is differentiable, $\delta(0)=0$ and
\begin{equation*}
\delta_+'(0)= \bigl(f_2(Y_0)-f_1(Y_0)
\bigr) + \bigl(\alpha_2(0)-\alpha _1(0) \bigr)>0.
\end{equation*}

It is clear that $\delta(t)=\delta_+'(0)t+o(t)$, $t\rightarrow0+$, so
there exists the maximal interval $(0,t^*)\subset(0, \infty)$ such
that $\delta(t)>0$ for all $t\in(0,t^*)$. It is also clear that
\begin{equation*}
t^*=\sup\bigl\{t>0~|~\forall s\in(0,t): \delta(s)>0\bigr\}.
\end{equation*}

Assume that $t^*< \infty$. \xch{According}{Acording} to the definition of $t^*$, $\delta
(t^*)=0$. Hence $Y_1(t^*)=Y_2(t^*)=Y^*$ and
\begin{equation*}
\delta'\bigl(t^*\bigr)= \bigl(f_2\bigl(Y^*
\bigr)-f_1\bigl(Y^*\bigr) \bigr) + \bigl(\alpha_2
\bigl(t^*\bigr)-\alpha _1\bigl(t^*\bigr) \bigr)>0.
\end{equation*}

As $\delta(t)=\delta'(t^*)(t-t^*)+o(t-t^*)$, $t\rightarrow t^*$, there
exists $\varepsilon>0$ such that $\delta(t)<0$ for all $t\in
(t^*-\varepsilon, t^*)$, that contradicts the definition of $t^*$.

Therefore, $\forall t>0$:
\begin{equation}
\label{comp ineq} Y_1(t)<Y_2(t).\qedhere
\end{equation}
\end{proof}

\begin{remark}\label{comparison remark}
It is obvious that Lemma \ref{comparison} still holds if we replace the
index set $[0,+\infty)$ by $[a,b]$, $a<b$, or if we consider the case
$Y_1(0)<Y_2(0)$.

Moreover, the condition (i) can be replaced by $f_1(y)\le f_2(y)$. In
this case it can be obtained that $Y_1(t,\omega)\le Y_2(t,\omega)$ for
all $\omega\in\varOmega$ and $t\ge0$.
\end{remark}

According to Lemma \ref{comparison}, for any $\varepsilon_1>\varepsilon
_2$ and for all $t\in(0,\infty)$:
\begin{equation*}
Y_{\varepsilon_1}(t)<Y_{\varepsilon_2}(t).
\end{equation*}

Now, let us show that there exists the limit
\begin{equation}
\label{Y defin} \lim_{\varepsilon\to0} \Y(t) = Y(t)<\infty.
\end{equation}

We will need an auxiliary result, presented in \cite{MishurafBm}.
\begin{lemma}\label{fBm moments} For all $r\ge1$:
\begin{equation*}
\mathbb E \Bigl[ \sup_{t\in[0,T]}\big|B^H(t)\big|^r
\Bigr]<\infty.
\end{equation*}
\end{lemma}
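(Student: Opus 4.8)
The plan is to exploit the Gaussianity of $B^H$ together with a chaining-type estimate. Since $B^H$ has a.s. continuous sample paths on the compact interval $[0,T]$, the quantity $\sup_{t\in[0,T]}|B^H(t)|$ is a.s. finite and is a genuine random variable (by continuity the supremum over $[0,T]$ coincides with the supremum over $[0,T]\cap\mathbb{Q}$, a countable family of measurable functions), so it only remains to control its moments.

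First I would apply the Garsia--Rodemich--Rumsey inequality to the continuous function $t\mapsto B^H(t)$: for $p\ge1$ and $\alpha>0$ with $\alpha p>1$,
\begin{equation*}
\sup_{s,t\in[0,T]}\big|B^H(t)-B^H(s)\big|^p \le C_{p,\alpha}\,T^{\alpha p-1}\int_0^T\int_0^T \frac{|B^H(x)-B^H(y)|^p}{|x-y|^{\alpha p+1}}\,dx\,dy,
\end{equation*}
with $C_{p,\alpha}$ a deterministic constant. Taking expectations, using Tonelli's theorem and the fact that $B^H(x)-B^H(y)$ is centered Gaussian with variance $|x-y|^{2H}$, so that $\mathbb{E}|B^H(x)-B^H(y)|^p=c_p|x-y|^{Hp}$ with $c_p$ depending only on $p$, the right-hand side becomes $C_{p,\alpha}c_p\,T^{\alpha p-1}\int_0^T\int_0^T|x-y|^{Hp-\alpha p-1}\,dx\,dy$, which is finite as soon as $Hp-\alpha p-1>-1$, i.e. $\alpha<H$. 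Choosing $\alpha\in(1/p,H)$ — possible once $p>1/H$ — yields $\mathbb{E}\big[\sup_{s,t\in[0,T]}|B^H(t)-B^H(s)|^p\big]<\infty$; since $B^H(0)=0$ this gives $\mathbb{E}\big[\sup_{t\in[0,T]}|B^H(t)|^p\big]<\infty$ for all sufficiently large $p$, and the statement for every $r\ge1$ follows by Lyapunov's inequality, $\|\cdot\|_{L^r}\le\|\cdot\|_{L^p}$ on a probability space whenever $r\le p$.

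The only delicate point is the bookkeeping of the exponents in the Garsia--Rodemich--Rumsey bound and the constraint $p>1/H$, which is genuinely restrictive here because $H<1/2$, so one must pass to large $p$ before interpolating down to arbitrary $r\ge1$. An alternative, slicker route is to invoke Fernique's theorem: a centered Gaussian process with a.s. bounded sample paths satisfies $\mathbb{E}\big[\exp\big(\lambda\sup_{t\in[0,T]}|B^H(t)|^2\big)\big]<\infty$ for some $\lambda>0$, whence all polynomial moments are finite via $x^r\le C_{r,\lambda}e^{\lambda x^2}$. In either approach the essential inputs are the Gaussian scaling $\mathbb{E}|B^H(t)-B^H(s)|^2=|t-s|^{2H}$ and the continuity of the paths.
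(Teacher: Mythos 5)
Your argument is correct, but note that the paper does not actually prove this lemma at all: it is quoted as a known auxiliary result from the monograph of Mishura (the reference \cite{MishurafBm}), so there is no internal proof to compare against. What you supply is a self-contained derivation, and both of your routes are standard and sound. The Garsia--Rodemich--Rumsey route is bookkept correctly: with $\mathbb{E}|B^H(x)-B^H(y)|^p = c_p|x-y|^{Hp}$ the double integral converges precisely when $\alpha<H$, the GRR hypothesis forces $\alpha>1/p$, so you need $p>1/H$ (indeed restrictive for $H<1/2$, but harmless), and then Lyapunov's inequality brings the conclusion down to every $r\ge 1$; the reduction $\sup_t|B^H(t)|\le\sup_{s,t}|B^H(t)-B^H(s)|$ via $B^H(0)=0$ is fine. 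The Fernique argument is even quicker, since square-exponential integrability of the supremum of a centered Gaussian process with bounded paths dominates all polynomial moments. The only implicit ingredient worth flagging is that you work with the continuous modification of $B^H$ (guaranteed by Kolmogorov's criterion, or obtainable from the same GRR estimate), which makes the supremum measurable as you observe. In short: the paper buys brevity by citation; your proposal buys self-containedness at the cost of invoking GRR or Fernique, and either is a legitimate proof of the stated moment bound.
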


Let $a$, $k$, $\sigma>0$, $H\in(0,1)$.
\begin{theorem}\label{upper boundary}
For all $H\in(0,1)$, $T>0$ and $r\ge1$ there are non-random constants
$C_1=C_1(T, r,Y_0, a, k)>0$ and $C_2=C_2(T, r, a, \sigma)>0$ such that
for all $\varepsilon>0$ and for all $t\in[0,T]$:
\begin{equation}
\label{upper boundary formula} \big|\Y(t)\big|^r \le C_1+C_2 \sup
_{s\in[0,T]}\big|\B{s}\big|^r.
\end{equation}
\end{theorem}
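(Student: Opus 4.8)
The plan is to establish \eqref{upper boundary formula} \emph{pathwise} and, crucially, uniformly in $\varepsilon$: I would fix once and for all an $\omega$ for which $t\mapsto\B t$ is continuous (which happens a.s.) and put $S:=\sup_{t\in[0,T]}|\B t|$. The right device is the ``drift part'' $Z_\varepsilon(t):=\Y(t)-\sigma\B t$, which by \eqref{new equation} equals $Y_0+\int_0^t f_\varepsilon(\Y(s))\,ds$ with $f_\varepsilon(y)=\frac{k}{y\mathbbm{1}_{\{y>0\}}+\varepsilon}-ay$; hence $Z_\varepsilon\in C^1([0,T])$ and $Z_\varepsilon'(t)=f_\varepsilon(\Y(t))$. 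The one thing to watch throughout is that $f_\varepsilon$ must never be bounded by $k/\varepsilon$, since the final constants have to be free of $\varepsilon$.

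For the lower bound I would use only that $f_\varepsilon(y)\ge-ay$ for every $y$ (the singular term is nonnegative). This gives $Z_\varepsilon'(t)+aZ_\varepsilon(t)=f_\varepsilon(\Y(t))+aZ_\varepsilon(t)\ge-a\Y(t)+aZ_\varepsilon(t)=-a\sigma\B t\ge-a\sigma S$, so $\bigl(e^{at}Z_\varepsilon(t)\bigr)'\ge-a\sigma S\,e^{at}$; integrating from $0$ to $t$ yields $Z_\varepsilon(t)\ge Y_0e^{-at}-\sigma S\bigl(1-e^{-at}\bigr)\ge-\sigma S$, whence $\Y(t)=Z_\varepsilon(t)+\sigma\B t\ge-2\sigma S$ for all $t\in[0,T]$. (Alternatively one may compare, via Lemma \ref{comparison}, with the fractional Ornstein--Uhlenbeck process solving $dV=-aV\,dt+\sigma\,dB^H$, $V(0)=Y_0$, and bound $|V|$ directly.)

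For the upper bound I would look at a point $t_0\in[0,T]$ at which $Z_\varepsilon$ attains its maximum over $[0,T]$. If $t_0=0$ then $\max_{[0,T]}Z_\varepsilon=Y_0$. Otherwise $Z_\varepsilon'(t_0)\ge0$ ($=0$ when $t_0$ is interior, $\ge0$ when $t_0=T$), i.e. $a\Y(t_0)\le\frac{k}{\Y(t_0)\mathbbm{1}_{\{\Y(t_0)>0\}}+\varepsilon}$. Here is the only case analysis that matters: if $\Y(t_0)\le1$, then trivially $Z_\varepsilon(t_0)=\Y(t_0)-\sigma\B{t_0}\le1+\sigma S$ (and this is precisely the place where the potentially huge values of $f_\varepsilon$ near the origin could have hurt, but they do not, because $\Y(t_0)$ is then small); if $\Y(t_0)>1$, then $\frac{k}{\Y(t_0)\mathbbm{1}_{\{\Y(t_0)>0\}}+\varepsilon}=\frac{k}{\Y(t_0)+\varepsilon}<k$, hence $a\Y(t_0)<k$, so $\Y(t_0)<k/a$ and $Z_\varepsilon(t_0)<k/a+\sigma S$. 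Combining the cases, $\max_{[0,T]}Z_\varepsilon\le\max\{Y_0,1,k/a\}+\sigma S$, and therefore $\Y(t)=Z_\varepsilon(t)+\sigma\B t\le\max\{Y_0,1,k/a\}+2\sigma S$ for every $t\in[0,T]$.

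Putting the two estimates together gives $|\Y(t)|\le\widetilde C_1+2\sigma S$ uniformly in $\varepsilon>0$ and $t\in[0,T]$, where $\widetilde C_1:=\max\{Y_0,1,k/a\}$ depends only on $Y_0,a,k$; raising to the power $r\ge1$ and using $(u+v)^r\le2^{r-1}(u^r+v^r)$ yields \eqref{upper boundary formula} with $C_1:=2^{r-1}\widetilde C_1^{\,r}$ and $C_2:=2^{r-1}(2\sigma)^r$, since $S^r=\sup_{s\in[0,T]}|\B s|^r$. The only genuinely delicate point is the case split in the upper bound: one has to be sure that the values of $f_\varepsilon$ that blow up as $\varepsilon\downarrow0$ never enter an estimate, which is exactly what the dichotomy $\Y(t_0)\le1$ versus $\Y(t_0)>1$ guarantees; everything else is Gronwall's inequality and elementary calculus. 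Note that Lemma \ref{fBm moments} plays no role here — it will be used only afterwards, to pass from \eqref{upper boundary formula} to uniform bounds on $\E\bigl[\sup_{t\in[0,T]}|\Y(t)|^r\bigr]$.
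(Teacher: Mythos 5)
Your proof is correct, and it takes a genuinely different route from the paper's. The paper controls the singular term $\frac{k}{\Y(s)\mathbbm{1}_{\{\Y(s)>0\}}+\varepsilon}$ by a time decomposition: it introduces the last time $\tau_1(\varepsilon)$ up to which $\Y\ge Y_0/2$ and, after that, the last entry time $\tau_2(\varepsilon,t)$ below $Y_0/2$ before $t$, so that on the relevant subintervals the drift is bounded by $2k/Y_0$; it then applies H\"older/Jensen and the Gr\"onwall inequality, producing constants of the form $e^{(8aT)^r}\bigl((4Y_0)^r+(16kT/Y_0)^r\bigr)$ and $(8\sigma)^re^{(8aT)^r}$, which grow with $T$. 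You instead work with the $C^1$ drift part $Z_\varepsilon=\Y-\sigma B^H$: the lower bound comes from $f_\varepsilon(y)\ge -ay$ and an integrating-factor (ODE comparison) argument, and the upper bound from a maximum-principle step, reading off the sign of $Z_\varepsilon'$ at the maximizer and splitting on $\Y(t_0)\le 1$ versus $\Y(t_0)>1$ so that the $k/\varepsilon$-sized values of $f_\varepsilon$ never enter any estimate. This exploits the mean reversion directly, avoids Gr\"onwall in its integral form, and yields the sharper bound $|\Y(t)|\le\max\{Y_0,1,k/a\}+2\sigma\sup_{s\in[0,T]}|B^H(s)|$ with constants independent of $T$ (which is more than the theorem requires and, of course, still admissible); the paper's argument is more routine but its constants deteriorate exponentially in $T$. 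Two small remarks: your case threshold $1$ is arbitrary (taking $\Y(t_0)>0$ and the bound $\Y(t_0)\le\sqrt{k/a}+\varepsilon$-type estimate would do as well), and your closing comment that the rest is ``Gronwall's inequality'' is slightly off — your argument never needs it, which is precisely its advantage.
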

\begin{proof}
Let an arbitrary $\omega\in\varOmega$, $r\ge1$ and $T>0$ be fixed (we
will omit $\omega$ in what follows).

Let us prove that for all $\varepsilon>0$ and for all $t \in[0,T]$:
%
\begin{gather}\label{pre-Gronwall} %
\big|\Y(t)\big|^r \le
\biggl((4Y_0)^r + \biggl(\frac{16kT}{Y_0}
\biggr)^r + (8\sigma)^r\sup_{s\in[0,T]}\big|B^H(s)\big|^r
\biggr)\nonumber
\\
+ (8a)^rT^{r-1}\int_{0}^t \big|
\Y(s)\big|^r ds.
\end{gather} 
Consider the moment
\begin{equation*}
\tau_1 (\varepsilon) := \sup \biggl\{s\in[0,T]~ |~\forall u
\in[0,s]: \Y(s)\ge\frac{Y_0}{2} \biggr\}.
\end{equation*}
Note that $\Y$ is continuous, so $0 < \tau_1 (\varepsilon) \le T$ and,
moreover, for all $t\in[0,\tau_1 (\varepsilon)]$: $\Y(t)\ge\frac{Y_0}{2}>0$.

In order to make the further proof more convenient for the reader, we
shall divide it into 3 steps. In Steps 1 and 2, we will separately show
that \eqref{pre-Gronwall} holds for all $t\in[0,\tau_1 (\varepsilon)]$
and for all $t\in(\tau_1 (\varepsilon), T]$, and in Step 3 we will
obtain the final result.

\textbf{Step 1.} Assume that $t\in[0,\tau_1 (\varepsilon)]$. Then,
%
%
\begin{gather}\label{moments: starting inequality}
\big|\Y(t)\big|^r = \Biggl\llvert
Y_0 + \int_0^t \fr{s}ds - a\int
_0^t \Y(s)ds +\sigma B^H(t) \Biggr
\rrvert ^r\nonumber
\\[-2pt]
\le \Biggl(Y_0 + \int_0^t
\fr{s}ds+ a\int_0^t \big|\Y(s)\big|ds +\sigma
\big|B^H(t)\big| \Biggr)^r.
\end{gather} %

If $r>1$, by applying the H\"older inequality to the right-hand side
of \eqref{moments: starting inequality}, we obtain that
%
\begin{gather} \label{true for all r} %
\big|\Y(t)\big|^r
\le4^{r-1} \Biggl(Y_0^r + \Biggl(\int
_0^t \fr{s}ds \Biggr)^r\nonumber
\\[-2pt]
+ a^r \Biggl(\int_0^t \big|\Y(s)\big|ds
\Biggr)^r + \sigma^r\big|B^H(t)\big|^r
\Biggr).
\end{gather} 
Note that \eqref{true for all r} is also true for $r=1$ as in this case
it simply coincides with right-hand side of \eqref{moments: starting
inequality}.

For all $t\in[0,\tau_1 (\varepsilon)]$
\begin{equation*}
\Biggl(\int_0^t \fr{s}ds \Biggr)^r
\le \Biggl(\int_0^t \frac
{2k}{Y_0}ds
\Biggr)^r \le \biggl(\frac{2kT}{Y_0} \biggr)^r.
\end{equation*}
For $r\ge1$, from Jensen's inequality,
\begin{equation*}
\Biggl(\int_0^t \big|\Y(s)\big| ds
\Biggr)^r \le t^{r-1} \int_0^t
\big|\Y(s)\big|^r ds \le T^{r-1} \int_0^t
\big|\Y(s)\big|^r ds.
\end{equation*}
Finally, for all $t\in[0,\tau_1 (\varepsilon)]$:
\begin{equation*}
\big|B^H(t)\big|^r \le\sup_{s\in[0,T]}\big|B^H(s)\big|^r.
\end{equation*}
Hence, for all $t\in[0,\tau_1 (\varepsilon)]$:
\begin{equation*}
\begin{gathered} \big|\Y(t)\big|^r \le4^{r-1}
\Biggl(Y_0^r + \biggl(\frac{2kT}{Y_0}
\biggr)^r + a^r T^{r-1} \int_0^t
\big|\Y(s)\big|^r ds + \sigma^r\sup_{s\in[0,T]}\big|B^H(s)\big|^r
\Biggr)
\\[-2pt]
\le \biggl((4Y_0)^r + \biggl(\frac{8kT}{Y_0}
\biggr)^r + (4\sigma)^r\sup_{s\in[0,T]}\big|B^H(s)\big|^r
\biggr) + (4a)^r T^{r-1} \int_0^t
\big|\Y(s)\big|^r ds
\\[-2pt]
\le \biggl((4Y_0)^r + \biggl(\frac{16kT}{Y_0}
\biggr)^r + (8\sigma)^r\sup_{s\in[0,T]}\big|B^H(s)\big|^r
\biggr) + (8a)^r T^{r-1} \int_0^t
\big|\Y(s)\big|^r ds. \end{gathered} %
\end{equation*}

Note that the inequality \eqref{pre-Gronwall} holds for all $t\in
[0,T]$, if $\tau_1 (\varepsilon)=T$.\newpage

\textbf{Step 2.} Assume that $\tau_1 (\varepsilon)<T$, i.e. the
interval $(\tau_1 (\varepsilon), T]$ is non-empty. From definition of
$\tau_1 (\varepsilon)$ and continuity\index{continuity} of $\Y$, $\Y(\tau_1 (\varepsilon
)) = \frac{Y_0}{2}$,  for all $t\in(\tau_1 (\varepsilon), T]$:
\begin{equation*}
\biggl\{s\in(\tau_1 (\varepsilon), t]~ |~\big|\Y(s)\big|<\frac{Y_0}{2}
\biggr\} \ne\emptyset.
\end{equation*}

Denote
\begin{equation*}
\tau_2(\varepsilon, t) := \sup \biggl\{s\in(\tau_1 (
\varepsilon), t]~ |~\big|\Y(s)\big|<\frac{Y_0}{2} \biggr\}.
\end{equation*}
Note that for all $t \in(\tau_1 (\varepsilon), T]$: $\tau_1(\varepsilon
)<\tau_2(\varepsilon, t)\le t$ and $|\Y(\tau_2(\varepsilon, t))|\le
\frac{Y_0}{2}$, so
%
\begin{gather}\label{inequality after getting in} %
\big|\Y(t)\big|^r = \big|\Y(t) - \Y
\bigl(\tau_2(\varepsilon, t)\bigr) + \Y\bigl(\tau_2(
\varepsilon , t)\bigr)\big|^r\nonumber
\\[-3pt]
\le2^{r-1} \bigl(\big|\Y(t) - \Y\bigl(\tau_2(\varepsilon, t)
\bigr)\big|^r + \big|\Y\bigl(\tau _2(\varepsilon, t)
\bigr)\big|^r \bigr)\nonumber
\\[-3pt]
\le2^{r-1} \biggl(\big|\Y(t) - \Y\bigl(\tau_2(\varepsilon, t)
\bigr)\big|^r + \biggl(\frac
{Y_0}{2} \biggr)^r \biggr)\nonumber
\\[-3pt]
\le2^{r-1}\big|\Y(t) - \Y\bigl(\tau_2(\varepsilon, t)
\bigr)\big|^r + Y_0^r.
\end{gather} %

In addition, if $\tau_2(\varepsilon, t) = t$, then
\begin{equation*}
\big|\Y(t) - \Y\bigl(\tau_2(\varepsilon, t)\bigr)\big|^r = 0,
\end{equation*}
and if $\tau_2(\varepsilon, t) < t$, then
\begin{equation*}
\begin{gathered} \big|\Y(t) - \Y\bigl(\tau_2(\varepsilon, t)
\bigr)\big|^r= \Bigg|\int_{\tau_2(\varepsilon,
t)}^t \fr{s}ds
\\[-3pt]
- a \int_{\tau_2(\varepsilon, t)}^t \Y(s)ds + \sigma
\bigl(B^H(t)-B^H \bigl(\tau_2(\varepsilon, t)
\bigr) \bigr) \Bigg|^r
\\[-3pt]
\le \Biggl(\int_{\tau_2(\varepsilon, t)}^t \fr{s}ds + a \int
_{\tau
_2(\varepsilon, t)}^t \big|\Y(s)\big| ds + \sigma\big|B^H(t)\big|\\
+\sigma \big|B^H \bigl(\tau_2(
\varepsilon, t) \bigr) \big| \Biggr)^r
\\[-3pt]
\le4^{r-1} \Biggl[ \Biggl(\int_{\tau_2(\varepsilon, t)}^t
\fr{s}ds \Biggr)^r + a^r \Biggl(\int_{\tau_2(\varepsilon, t)}^t
\big|\Y(s)\big| ds \Biggr)^r
\\[-3pt]
+ \sigma^r\big|B^H(t)\big|^r + \sigma^r
\big|B^H \bigl(\tau_2(\varepsilon, t) \bigr) \big|^r
\Biggr].
\end{gathered} %
\end{equation*}
In this case, from definition of $\tau_2(\varepsilon, t)$, for all
$s\in[\tau_2(\varepsilon, t), t]$: $\Y(s) \ge\frac{Y_0}{2}$, so
\begin{equation*}
\Biggl(\int_{\tau_2(\varepsilon, t)}^t \fr{s}ds \Biggr)^r
\le \biggl(\frac
{2kT}{Y_0} \biggr)^r.
\end{equation*}

Furthermore, from Jensen's inequality,
\begin{equation*}
\begin{gathered} \Biggl(\int_{\tau_2(\varepsilon, t)}^t \big|
\Y(s)\big|ds \Biggr)^r \le \Biggl(\int_{0}^t
\big|\Y(s)\big|ds \Biggr)^r \le t^{r-1}\int_{0}^t
\big|\Y(s)\big|^r ds
\\[-3pt]
\le T^{r-1}\int_{0}^t \big|
\Y(s)\big|^r ds. \end{gathered} %
\end{equation*}
Next,
\begin{equation*}
\sigma^r\big|B^H(t)\big|^r + \sigma^r
\big|B^H \bigl(\tau_2(\varepsilon, t) \bigr) \big|^r
\le2\sigma^r\sup_{s\in[0,T]}\big|B^H(s)\big|^r.
\end{equation*}
Hence,
%
\begin{gather}\label{inequality after getting out} %
\big|\Y(t) - \Y\bigl(\tau_2(
\varepsilon, t)\bigr)\big|^r\nonumber
\\
\le \biggl(\frac{8kT}{Y_0} \biggr)^r + (4a)^rT^{r-1}
\int_{0}^t \big|\Y(s)\big|^r ds + (4
\sigma)^r\sup_{s\in[0,T]}\big|B^H(s)\big|^r.
\end{gather} %

Finally, from \eqref{inequality after getting out} and \eqref
{inequality after getting in}, for all $t \in(\tau_1 (\varepsilon), T]$:
\begin{equation*}
\begin{gathered} \big|\Y(t)\big|^r
\\
\le \biggl(Y_0^r + \biggl(\frac{16kT}{Y_0}
\biggr)^r + (8\sigma)^r\sup_{s\in[0,T]}\big|B^H(s)\big|^r
\biggr) + (8a)^rT^{r-1}\int_{0}^t
\big|\Y(s)\big|^r ds
\\
\le \biggl((4Y_0)^r + \biggl(\frac{16kT}{Y_0}
\biggr)^r + (8\sigma)^r\sup_{s\in[0,T]}\big|B^H(s)\big|^r
\biggr)
\\
+ (8a)^rT^{r-1}\int_{0}^t \big|
\Y(s)\big|^r ds. \end{gathered} %
\end{equation*}

Therefore, \eqref{pre-Gronwall} indeed holds for all $t\in[0,T]$.

\textbf{Step 3.} From \eqref{pre-Gronwall}, by applying the Gr\"onwall
inequality, we obtain that for all $t \in[0, T]$:
\begin{equation*}
\begin{gathered} \big|\Y(t)\big|^r \le \biggl((4Y_0)^r
+ \biggl(\frac{16kT}{Y_0} \biggr)^r + (8\sigma)^r\sup
_{s\in[0,T]}\big|B^H(s)\big|^r \biggr)e^{(8aT)^r}
\\
=: C_1 + C_2 \sup_{s\in[0,T]}\big|B^H(s)\big|^r,
\end{gathered} %
\end{equation*}
where
\begin{gather*}
C_1 = e^{(8aT)^r} \biggl((4Y_0)^r +
\biggl(\frac{16kT}{Y_0} \biggr)^r \biggr),
\\C_2 = (8\sigma)^re^{(8aT)^r},
\end{gather*}
which ends the proof.
\end{proof}

\begin{corollary}
For all $T>0$ and $r\ge1$ there exists $C=C(T, r, Y_0, k, a, \sigma,
H)<\infty$ such that for all $\varepsilon>0$ and $t\in[0,T]$:
\begin{equation*}
\E\big|\Y(t)\big|^r <C.
\end{equation*}
\end{corollary}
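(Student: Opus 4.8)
The plan is to read off this statement as a direct consequence of Theorem~\ref{upper boundary} together with Lemma~\ref{fBm moments}. The crucial feature of the estimate \eqref{upper boundary formula} is that the constants $C_1$ and $C_2$ appearing there depend only on $T,r,Y_0,a,k,\sigma$ and \emph{not} on $\varepsilon$ or $t$, while the random part of the bound, $\sup_{s\in[0,T]}|\B{s}|^r$, likewise involves neither $\varepsilon$ nor $t$. Thus taking expectations in \eqref{upper boundary formula} will produce a bound that is uniform in both $\varepsilon>0$ and $t\in[0,T]$.

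Concretely, I would fix $T>0$ and $r\ge1$, invoke Theorem~\ref{upper boundary} to obtain, for every $\varepsilon>0$ and every $t\in[0,T]$, the pointwise (in $\omega$) inequality
\[
\big|\Y(t)\big|^r \le C_1 + C_2 \sup_{s\in[0,T]}\big|\B{s}\big|^r,
\]
and then apply the monotonicity of the expectation to get
\[
\E\big|\Y(t)\big|^r \le C_1 + C_2\,\E\Bigl[\sup_{s\in[0,T]}\big|\B{s}\big|^r\Bigr].
\]
Since $r\ge1$, Lemma~\ref{fBm moments} ensures that $\E\bigl[\sup_{s\in[0,T]}|\B{s}|^r\bigr]<\infty$, so the right-hand side is a finite constant depending only on $T,r,Y_0,k,a,\sigma$ and $H$, the dependence on $H$ entering solely through this moment of the fractional Brownian motion. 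Setting
\[
C := C_1 + C_2\,\E\Bigl[\sup_{s\in[0,T]}\big|\B{s}\big|^r\Bigr] + 1
\]
then yields the claimed strict bound $\E|\Y(t)|^r < C$ for all $\varepsilon>0$ and all $t\in[0,T]$.

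There is essentially no genuine obstacle here, since all the real work is already contained in Theorem~\ref{upper boundary}. The only points requiring minimal care are: the passage from the non-strict inequality obtained after taking expectations to the strict inequality in the statement, which is handled by enlarging the constant by any positive amount (here $+1$); and the measurability of $\sup_{s\in[0,T]}|\B{s}|$, which is standard because $B^H$ has continuous sample paths, so this supremum agrees with the supremum over a countable dense subset of $[0,T]$ and is therefore a well-defined random variable to which the expectation applies.
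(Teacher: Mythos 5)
Your proposal is correct and follows exactly the paper's argument: the paper's proof likewise derives the corollary immediately from Theorem \ref{upper boundary} and Lemma \ref{fBm moments}, by taking expectations in the $\varepsilon$- and $t$-uniform bound \eqref{upper boundary formula}. Your additional remarks on strictness of the inequality and measurability of the supremum are harmless elaborations of the same route.
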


\begin{proof}
The proof immediately follows from Lemma \ref{fBm moments} and Theorem
\ref{upper boundary}.
\end{proof}

\begin{corollary}\label{existence of the limit}
Let $r\ge1$ and $C_1$, $C_2$ be constants from Theorem \ref{upper
boundary}, and
\begin{equation}
\label{limit process} Y(t,\omega) = \lim_{\varepsilon\to0}\Y(t,\omega), \quad t
\in[0,T], \ \omega\in\varOmega.
\end{equation}

Then
\begin{equation*}
\big|Y(t)\big|^r < C_1 + C_2\sup
_{s\in[0,T]}\big|\B{s}\big|^r.
\end{equation*}

In particular,
\begin{equation*}
Y(t) <\infty\quad a.s.
\end{equation*}
\end{corollary}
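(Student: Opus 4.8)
The plan is to deduce everything from two facts already available: the monotonicity of $\varepsilon\mapsto\Y(t,\omega)$ coming from the comparison Lemma \ref{comparison}, and the uniform-in-$\varepsilon$ bound of Theorem \ref{upper boundary}. No contraction or Cauchy-type argument is required.

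First I would fix $\omega\in\varOmega$ and $t\in[0,T]$. By Lemma \ref{comparison} (in the version of Remark \ref{comparison remark}, on the interval $[0,T]$), applied to $Y_{\varepsilon_1}$ and $Y_{\varepsilon_2}$ with continuous drifts $f_{\varepsilon_i}(y)=\frac{k}{y\mathbbm1_{\{y>0\}}+\varepsilon_i}-ay$ and with the extra additive terms taken equal to $0$, one obtains $Y_{\varepsilon_1}(t,\omega)<Y_{\varepsilon_2}(t,\omega)$ for all $\varepsilon_1>\varepsilon_2>0$ — this is exactly the observation recorded just after Lemma \ref{comparison}. Hence, for each fixed $(t,\omega)$, the family $\{\Y(t,\omega)\}_{\varepsilon>0}$ is nondecreasing as $\varepsilon\downarrow0$. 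I would then invoke Theorem \ref{upper boundary}: for every $\varepsilon>0$,
\begin{equation*}
\big|\Y(t,\omega)\big|\le\Bigl(C_1+C_2\sup_{s\in[0,T]}\big|\B{s}\big|^r\Bigr)^{1/r}=:R(\omega)<\infty,
\end{equation*}
so this monotone family is confined to the fixed interval $[-R(\omega),R(\omega)]$. A bounded monotone family of reals converges, so $\lim_{\varepsilon\to0}\Y(t,\omega)$ exists and is finite for \emph{every} $\omega\in\varOmega$ (this is the limit \eqref{limit process}), and in fact $Y(t,\omega)=\sup_{\varepsilon>0}\Y(t,\omega)$.

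It then remains to pass to the limit $\varepsilon\downarrow0$ in the bound of Theorem \ref{upper boundary}. Since $x\mapsto|x|^r$ is continuous,
\begin{equation*}
\big|Y(t,\omega)\big|^r=\lim_{\varepsilon\to0}\big|\Y(t,\omega)\big|^r\le C_1+C_2\sup_{s\in[0,T]}\big|\B{s}\big|^r,
\end{equation*}
which is the estimate of the corollary. Finally, for the ``in particular'' part: $B^H$ has a.s. continuous sample paths on the compact interval $[0,T]$, so $\sup_{s\in[0,T]}|\B{s}|<\infty$ a.s.; equivalently, Lemma \ref{fBm moments} gives $\E\sup_{s\in[0,T]}|\B{s}|^r<\infty$, hence a.s. finiteness. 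Therefore the right-hand side above is a.s. finite, and consequently $|Y(t)|^r<\infty$, i.e. $Y(t)<\infty$, a.s.

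I do not expect a genuine obstacle here. The only points worth keeping in mind are that the limiting process $Y$ is built \emph{pathwise for every} $\omega$ — not merely almost surely — since both Lemma \ref{comparison} and Theorem \ref{upper boundary} hold for every $\omega\in\varOmega$; and that the existence of the limit is produced by monotonicity together with a uniform bound, rather than by any estimate on a difference $\Y-Y_{\varepsilon'}$ of two approximations. Once these are noted, the rest is a routine interchange of the limit with the inequality and the standard fact that a sample-continuous process is bounded on a compact interval.
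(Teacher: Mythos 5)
Your argument is correct and follows essentially the same route as the paper: monotonicity of $\varepsilon\mapsto\Y(t,\omega)$ from Lemma \ref{comparison} gives existence of the limit, the $\varepsilon$-independent bound of Theorem \ref{upper boundary} passes to the limit, and a.s. finiteness of $\sup_{s\in[0,T]}|\B{s}|$ (Lemma \ref{fBm moments}) gives $Y(t)<\infty$ a.s. Your write-up merely makes the monotone-bounded convergence step more explicit than the paper does, which is a harmless elaboration, not a different approach.
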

\begin{proof}
Let $\omega\in\varOmega$ and $t\in(0,T]$ be fixed (if $t=0$, the result is
trivial).

From Lemma \ref{comparison}, if $\varepsilon_1>\varepsilon_2$, then
$Y_{\varepsilon_1}(t) < Y_{\varepsilon_2}(t)$, therefore the limit in
\eqref{limit process} exists.

The upper bound for $|Y(t)|^r$ immediately follows from Theorem \ref
{upper boundary} as the right-hand side of \eqref{upper boundary
formula} does not depend on $\varepsilon$. The a.s. finiteness of $Y$
follows from the a.s. finiteness of $\sup_{s\in[0,T]}|\B{s}|$, which is
a direct consequence of Lemma \ref{fBm moments}.
\end{proof}

\begin{corollary}
The process $Y$ is Lebesgue integrable on an arbitrary interval $[0,t]$ a.s.
\end{corollary}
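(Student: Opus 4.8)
The plan is to exploit Corollary \ref{existence of the limit} with $r=1$, which already furnishes a bound on $|Y(s)|$ that is uniform in $s\in[0,t]$ and almost surely finite, and then to combine it with the Borel measurability of the trajectory $s\mapsto Y(s)$ to conclude Lebesgue integrability on $[0,t]$.

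First I would fix $\omega$ outside the null set on which $\sup_{s\in[0,t]}|\B{s}|=\infty$; by Lemma \ref{fBm moments} this exceptional set is indeed $\mathbb P$-null. For every such $\omega$, Corollary \ref{existence of the limit} applied with $T=t$ and $r=1$ gives
$|Y(s,\omega)| < C_1 + C_2\sup_{u\in[0,t]}|\B{u}|(\omega) =: K(\omega) < \infty$
for all $s\in[0,t]$, where $C_1,C_2$ are the non-random constants from Theorem \ref{upper boundary}. In other words, the trajectory $s\mapsto Y(s,\omega)$ is bounded on $[0,t]$ by the single random constant $K(\omega)$.

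Second I would check that $s\mapsto Y(s,\omega)$ is Borel measurable. By Lemma \ref{comparison} together with Remark \ref{comparison remark}, for fixed $\omega$ the map $\varepsilon\mapsto Y_\varepsilon(s,\omega)$ is monotone, so $Y(s,\omega)=\lim_{n\to\infty}Y_{1/n}(s,\omega)$ is a pointwise limit of the continuous (hence Borel) functions $s\mapsto Y_{1/n}(s,\omega)$; a pointwise limit of Borel functions is Borel. Combining the two observations, $\int_0^t|Y(s,\omega)|\,ds \le t\,K(\omega) < \infty$ for almost all $\omega$, which is exactly the claimed Lebesgue integrability of $Y$ on $[0,t]$.

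There is no genuine obstacle in this argument: all the work has already been done in Theorem \ref{upper boundary} and its corollary. The only point deserving an explicit word is the measurability of the limiting trajectory, and this follows immediately from the fact that the convergence $Y_\varepsilon\to Y$ is monotone in $\varepsilon$ by the comparison lemma, so $Y(\cdot,\omega)$ is a pointwise limit of continuous functions.
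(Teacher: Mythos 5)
Your proof is correct, but it takes a slightly different route from the paper. You argue entirely pathwise: the uniform bound $|Y(s)|\le C_1+C_2\sup_{u\in[0,t]}|B^H(u)|$ from Corollary \ref{existence of the limit} makes the trajectory bounded on $[0,t]$ for almost every $\omega$, and since it is a pointwise limit of the continuous functions $Y_{1/n}(\cdot,\omega)$ it is Borel measurable, so a bounded measurable function on a compact interval is trivially Lebesgue integrable. The paper instead keeps the expectation in play: it uses the same measurability observation, but then bounds $\mathbb E\vert\int_0^t Y(s)\,ds\vert^r$ via Jensen and Tonelli together with the uniform moment bound $\mathbb E|Y(s)|^r\le C$, and deduces a.s.\ finiteness of the integral from finiteness of its moments. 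Your argument is the more elementary of the two and needs no integration of expectations at all; the paper's argument, at the price of invoking Tonelli, delivers the stronger quantitative byproduct that $\int_0^t Y(s)\,ds$ has finite moments of every order $r\ge1$, not merely that it is a.s.\ finite. Both hinge on the same Theorem \ref{upper boundary}, so there is no gap in what you wrote; the only cosmetic remark is that a.s.\ finiteness of $\sup_{u\in[0,t]}|B^H(u)|$ is immediate from continuity of the fBm paths, with Lemma \ref{fBm moments} (which the paper also cites for this purpose) being a stronger statement than you actually need.
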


\begin{proof}
First, note that the trajectories of $Y$ are measurable as they are the
pointwise limits\index{pointwise limit} of continuous functions.

Let $t\in\mathbb R_+$ be fixed. Due to Tonelli's theorem, for any
$r\ge1$ there is such $C$ that
\begin{equation*}
\mathbb E \Biggl[ \Biggl\llvert \int_0^t
Y(s)ds \Biggr\rrvert ^r \Biggr]\le T^{r-1} \mathbb E \Biggl[
\int_0^t \bigl\llvert Y(s) \bigr\rrvert
^rds \Biggr] = T^{r-1}\int_0^t
\mathbb E \bigl\llvert Y(s) \bigr\rrvert ^rds \le CT^r,
\end{equation*}
therefore
\begin{equation*}
\int_0^t Y(s)ds < \infty\quad \mathrm{a.s.}
\end{equation*}
\end{proof}

\begin{remark}\label{monotone conv of int} For all $t>0$:
\begin{equation*}
\lim_{\varepsilon\to0}\int_0^t \Y(s)ds =
\int_0^t Y(s)ds \quad \mathrm{a.s.}
\end{equation*}
\end{remark}
\begin{proof}
It follows immediately from monotonicity of $\Y$ with respect to
$\varepsilon$.
\end{proof}

\begin{remark}
Later it will be shown that $Y$ is Riemann integrable\index{Riemann integrable} as well. Until
that, all integrals should be considered as the Lebesgue integrals.
\end{remark}

\begin{remark}
We will sometimes refer to the limit process $Y$\index{limit process} as to the square root
process. It will be shown that it coincides with the square root
process presented in Section \ref{section 1} until the first zero
hitting by the latter.
\end{remark}

\begin{remark}
Further, we will consider only finite and integrable paths of $Y$.
\end{remark}

\section{Properties of $\varepsilon$-approximations and the square root
process}\label{section 3}

Now let us prove several properties of both square root process and its
$\varepsilon$-approxima\-tions.

\begin{lemma}\label{measurelemma} Let $T>0$ and $\lambda$ be the
Lebesgue measure on $[0,T]$. Then
\begin{equation*}
\lambda\bigl\{t\in[0,T]~| ~\Y(t)\le0\bigr\}\to0, \quad\varepsilon\to0,~a.s.
\end{equation*}
\end{lemma}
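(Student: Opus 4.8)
The plan is to exploit the fact that wherever $\Y$ is non-positive its drift equals the enormous quantity $k/\varepsilon$, so that a ``bad set'' of non-negligible Lebesgue measure would force $\Y(T)$ to be huge, contradicting the $\varepsilon$-uniform bound of Theorem~\ref{upper boundary}. First I would fix $\omega$ in the a.s.\ event on which $B^H$ has continuous paths, so that $M:=\sup_{s\in[0,T]}|\B{s}|<\infty$ (this is where Lemma~\ref{fBm moments} is used), and put $A_\varepsilon:=\{t\in[0,T]\mid\Y(t)\le0\}$. Since $\Y$ is continuous, $A_\varepsilon$ is closed, hence Lebesgue measurable, and $\lambda(A_\varepsilon)$ is exactly the quantity in the statement. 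The denominator $\Y(s)\mathbbm1_{\{\Y(s)>0\}}+\varepsilon$ is always $\ge\varepsilon>0$, so the integrand $\fr{s}$ is non-negative on $[0,T]$ and equals exactly $k/\varepsilon$ on $A_\varepsilon$; therefore
\[
\int_0^T \fr{s}\,ds \ \ge\ \int_{A_\varepsilon}\frac{k}{\varepsilon}\,ds \ =\ \frac{k}{\varepsilon}\,\lambda(A_\varepsilon).
\]

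On the other hand, evaluating \eqref{new equation} at $t=T$ and rearranging,
\[
\int_0^T \fr{s}\,ds \ =\ \Y(T) - Y_0 - \sigma\B{T} + a\int_0^T \Y(s)\,ds,
\]
and by Theorem~\ref{upper boundary} with $r=1$ one has $|\Y(s)|\le C_1+C_2M$ for all $s\in[0,T]$ and all $\varepsilon>0$, so the right-hand side is at most the $\varepsilon$-free constant $K:=(1+aT)(C_1+C_2M)+Y_0+\sigma M$. Combining the two displays gives $(k/\varepsilon)\,\lambda(A_\varepsilon)\le K$, i.e.
\[
\lambda\bigl\{t\in[0,T]\mid\Y(t)\le0\bigr\}\ \le\ \frac{\varepsilon K}{k},
\]
which tends to $0$ as $\varepsilon\to0$ (and in fact gives a linear-in-$\varepsilon$ rate).

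I do not expect a real obstacle here: the argument is essentially a rearrangement of the defining equation together with the a priori bound already in hand. The only points to handle carefully are that $K<\infty$ for a.e.\ $\omega$, which is precisely the a.s.\ finiteness of $\sup_{s\in[0,T]}|B^H(s)|$ provided by Lemma~\ref{fBm moments}, and that the right-hand side of \eqref{upper boundary formula} is uniform in both $s$ and $\varepsilon$, which it manifestly is. Note that, unlike in the construction of the limit process $Y$, the monotonicity of $\Y$ in $\varepsilon$ is not needed for this lemma.
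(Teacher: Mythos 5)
Your proof is correct, and it shares the paper's central observation---that on the set $\{t\in[0,T]\mid\Y(t)\le0\}$ the drift integrand equals $k/\varepsilon$, so $\int_0^T\fr{s}\,ds\ge(k/\varepsilon)\,\lambda\{t\mid\Y(t)\le0\}$---but you control that integral by a different means. The paper rearranges the equation and passes to the limit $\varepsilon\to0$: using the monotonicity of $\Y$ in $\varepsilon$ (Remark \ref{monotone conv of int}) together with the finiteness of the limit process $Y$ and of $\int_0^t Y(s)\,ds$, it shows that $\int_0^t\fr{s}\,ds$ converges to a finite limit, and then derives a contradiction along a sequence $\varepsilon_n\downarrow0$ on which the bad set would have measure at least $\delta$. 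You instead bound the rearranged right-hand side uniformly in $\varepsilon$ via Theorem \ref{upper boundary} with $r=1$ (plus the a.s.\ finiteness of $\sup_{s\in[0,T]}|\B{s}|$), obtaining the quantitative estimate $\lambda\{t\in[0,T]\mid\Y(t)\le0\}\le K\varepsilon/k$ with $K=K(\omega)$ independent of $\varepsilon$. What your route buys: no appeal to the limit process, to monotonicity in $\varepsilon$, or to an argument by contradiction, and an explicit linear-in-$\varepsilon$ rate rather than bare convergence; the paper's route gains essentially nothing extra here, since its finiteness input (Corollary \ref{existence of the limit}) itself rests on the same Theorem \ref{upper boundary}, so your argument is, if anything, more direct.
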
\newpage

\begin{proof} Let $\omega\in\varOmega$ be fixed (we will omit $\omega$ in
what follows).

From the definition of $Y$ and Remark \ref{monotone conv of int}, for any
$t\in[0,T]$ the left-hand side of
\begin{equation*}
Y_{\varepsilon}(t) - Y_0 + a\int_0^t
Y_{\varepsilon}(s)ds - \sigma\B {t} = \int_0^t
\fr{s}ds
\end{equation*}
converges to
\begin{equation*}
Y(t) - Y_0 + a\int_0^t Y(s)ds -
\sigma\B{t}<\infty,
\end{equation*}
as $\varepsilon\to0$. Therefore there exists a limit
\begin{equation}
\label{limit fraction} \lim_{\varepsilon\to0} \int_0^t
\fr{s}ds <\infty.
\end{equation}

Assume that there exists a sequence $\{\varepsilon_n:n\ge1\}$,
$\varepsilon_n\downarrow0$, and $\delta>0$ such that for all $n\ge1$:
\begin{equation*}
\lambda\bigl\{t\in[0,T]~|~\Yn(t)\le0\bigr\}\ge\delta>0.
\end{equation*}

In such case, as
\begin{equation*}
\begin{gathered} \int_0^T \frn{t}\xch{dt}{dt=}
\\
=\int_{\{t\in[0,T]| \Yn(t)\le0\}}\frn{t}dt
\\
+ \int_{\{t\in[0,T]| \Yn(t)> 0\}}\frn{t}dt
\\
\ge\int_{\{t\in[0,T]| \Yn(t)\le0\}}\frn{t}dt \ge\frac{k\delta
}{\varepsilon_n}, \end{gathered}
\end{equation*}
it is clear that
\begin{equation*}
\int_0^T \frn{t}dt \to\infty, \quad n\to
\infty,
\end{equation*}
that contradicts \eqref{limit fraction}.
\end{proof}

\begin{corollary}\label{positiveness}
For any $T>0$, $Y(t)>0$ almost everywhere on $[0,T]$ a.s. and hence
$Y(t)>0$ almost everywhere on $\mathbb R_+$ a.s.
\end{corollary}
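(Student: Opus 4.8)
The plan is to obtain this as a direct consequence of Lemma~\ref{measurelemma} together with the monotonicity of the $\varepsilon$-approximations in $\varepsilon$. First I would fix $T>0$ and work on the almost sure event on which $Y_\varepsilon(t,\omega)\to Y(t,\omega)<\infty$ for every $t$ (Corollary~\ref{existence of the limit}) and on which $\lambda\{t\in[0,T]:Y_\varepsilon(t)\le0\}\to0$ as $\varepsilon\to0$ (Lemma~\ref{measurelemma}); the complement of this event is $\mathbb{P}$-negligible. On this event I recall from Lemma~\ref{comparison} that $\varepsilon_1>\varepsilon_2$ forces $Y_{\varepsilon_1}(t)<Y_{\varepsilon_2}(t)$ for all $t>0$, so as $\varepsilon\downarrow0$ the values $Y_\varepsilon(t)$ increase to their supremum, which is exactly $Y(t)$. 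In particular $Y_\varepsilon(t)\le Y(t)$ for every $\varepsilon>0$ and every $t$.

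The key step is then the resulting inclusion of level sets: for each fixed $\varepsilon>0$,
\[
\bigl\{t\in[0,T] : Y(t)\le0\bigr\}\subseteq\bigl\{t\in[0,T] : Y_\varepsilon(t)\le0\bigr\},
\]
since $Y_\varepsilon(t)\le Y(t)$ means that $Y(t)\le0$ implies $Y_\varepsilon(t)\le0$. Applying the Lebesgue measure $\lambda$ on $[0,T]$ and letting $\varepsilon\to0$, Lemma~\ref{measurelemma} yields
\[
\lambda\bigl\{t\in[0,T] : Y(t)\le0\bigr\}\le\lim_{\varepsilon\to0}\lambda\bigl\{t\in[0,T] : Y_\varepsilon(t)\le0\bigr\}=0,
\]
so $Y(t)>0$ for $\lambda$-almost every $t\in[0,T]$ on this event, and hence $Y(t)>0$ almost everywhere on $[0,T]$, almost surely.

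To pass from $[0,T]$ to $\mathbb R_+$ I would take a countable union: for each $n\in\mathbb N$ there is a $\mathbb{P}$-null set $N_n$ off which $Y(\cdot)>0$ almost everywhere on $[0,n]$, and off the null set $\bigcup_{n\ge1}N_n$ the process $Y$ is positive almost everywhere on every $[0,n]$, hence almost everywhere on $\mathbb R_+$. I do not expect any genuine obstacle in this argument; the only point that needs a little care is the direction of the monotonicity (a smaller $\varepsilon$ gives a larger $Y_\varepsilon$), since this is precisely what makes the level-set inclusion above point the right way and lets Lemma~\ref{measurelemma} be applied to the limit process.
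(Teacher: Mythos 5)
Your proof is correct and is essentially the argument the paper intends (the corollary is left without an explicit proof there, being viewed as immediate from Lemma~\ref{measurelemma}): the monotonicity $Y_\varepsilon(t)\le Y(t)$ gives the level-set inclusion $\{t:Y(t)\le0\}\subseteq\{t:Y_\varepsilon(t)\le0\}$, and Lemma~\ref{measurelemma} forces the left-hand set to be Lebesgue-null, with the passage to $\mathbb{R}_+$ by a countable union of null events. No issues.
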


\begin{corollary} Let $T>0$ be arbitrary. Then, for all $t\in[0,T]$:
\begin{equation*}
\int_0^t \frac{k}{Y(s)}ds<\infty.
\end{equation*}
\end{corollary}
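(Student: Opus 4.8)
The plan is to pass to the limit $\varepsilon\to0$ inside the integral $\int_0^t\fr{s}\,ds$ via Fatou's lemma, using that the limiting integrand is $k/Y(s)$ almost everywhere, and then to read off finiteness of $\int_0^t k/Y(s)\,ds$ from the bound \eqref{limit fraction} obtained in the proof of Lemma~\ref{measurelemma}.

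First I would fix an $\omega$ outside the exceptional null set, fix $t\in[0,T]$, and choose an arbitrary sequence $\varepsilon_n\downarrow0$. By Corollary~\ref{positiveness}, $Y(s)>0$ for Lebesgue-almost every $s\in[0,t]$. For such $s$, the monotonicity of the $\varepsilon$-approximations (Lemma~\ref{comparison} and the subsequent remarks) gives $\Yn(s)\ge Y(s)>0$, so $\mathbbm{1}_{\{\Yn(s)>0\}}=1$ and $\Yn(s)+\varepsilon_n\to Y(s)$ as $n\to\infty$; hence the integrands converge pointwise almost everywhere,
\begin{equation*}
\frn{s}\longrightarrow\frac{k}{Y(s)},\qquad n\to\infty .
\end{equation*}
Note also that $k/Y(s)$ is a well-defined nonnegative measurable function on $[0,t]$ (up to a null set), so the integral in the statement makes sense in $[0,+\infty]$.

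Next, since the functions $\frn{\cdot}$ are nonnegative, Fatou's lemma yields
\begin{equation*}
\int_0^t\frac{k}{Y(s)}\,ds=\int_0^t\liminf_{n\to\infty}\frn{s}\,ds\le\liminf_{n\to\infty}\int_0^t\frn{s}\,ds .
\end{equation*}
By \eqref{limit fraction} (established in the proof of Lemma~\ref{measurelemma}), the limit $\lim_{\varepsilon\to0}\int_0^t\fr{s}\,ds$ exists and is finite, so the right-hand side is finite. This proves $\int_0^t k/Y(s)\,ds<\infty$; since $t\in[0,T]$ and $T>0$ were arbitrary, the conclusion holds for every $t\ge0$.

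There is no real obstacle here; the only point needing care is the a.e.\ pointwise convergence of the integrands, which relies on Corollary~\ref{positiveness} together with the monotone convergence $\Yn\uparrow Y$. In particular one cannot argue by dominated convergence with dominating function $k/Y(s)$, since its integrability is exactly what is being proved, so Fatou is the appropriate tool.
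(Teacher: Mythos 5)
Your proof is correct and takes essentially the same route as the paper, which simply applies Fatou's lemma to get $\int_0^t \frac{k}{Y(s)}ds \le \lowlim_{\varepsilon\to0}\int_0^t \fr{s}ds$ and concludes by the finiteness of the limit \eqref{limit fraction} from the proof of Lemma \ref{measurelemma}. One small slip in your justification of the a.e.\ pointwise convergence: monotonicity gives $\Yn(s)\le Y(s)$ (the approximations increase to $Y$ as $\varepsilon_n\downarrow0$), not $\Yn(s)\ge Y(s)$, so at a point with $Y(s)>0$ the indicator $\mathbbm{1}_{\{\Yn(s)>0\}}$ equals $1$ only for all sufficiently large $n$; the convergence $\frn{s}\to k/Y(s)$ a.e.\ that you need still holds.
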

\begin{proof}
According to the Fatou lemma,
\begin{equation*}
\int_0^t \frac{k}{Y(s)}ds \le
\lowlim_{\varepsilon\to0}\int_0^t \fr {s}ds<\infty.\qedhere
\end{equation*}
\end{proof}

For the next result, we will require the following well-known property
of the fractional Brownian motion\index{fractional Brownian motion} (see, for example, \cite{MishurafBm}).

\begin{lemma}\label{Holder fBm}
Let $\{B^H(t), t\ge0\}$ be a fractional Brownian motion\index{fractional Brownian motion} with the Hurst
index $H$.\index{Hurst index} Then there is such $\varOmega'\subset\varOmega$, $\mathbb P(\varOmega
') =1$, that for all $\omega\in\varOmega'$, $T>0$, $\gamma>0$ and $0\le
s\le t\le T$ there is a positive $C = C(\omega, T, \gamma)$ for which:
\begin{equation*}
\big|B^H(t)-B^H(s)\big| \le C|t-s|^{H-\gamma}.
\end{equation*}
\end{lemma}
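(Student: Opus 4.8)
The plan is to obtain the statement from the Kolmogorov--Chentsov continuity theorem, using the Gaussianity of $B^H$ and the fact that its increments are stationary and self-similar.

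First I would record the key moment identity: for any $0\le s\le t$ the increment $B^H(t)-B^H(s)$ is centered Gaussian with variance $|t-s|^{2H}$, so for every integer $m\ge 1$
\begin{equation*}
\mathbb E\big|B^H(t)-B^H(s)\big|^{2m} = c_m\,|t-s|^{2mH}, \qquad c_m := \mathbb E|\xi|^{2m},
\end{equation*}
where $\xi$ is standard normal. Fixing $T>0$ and writing $|t-s|^{2mH}=|t-s|^{1+(2mH-1)}$, the Kolmogorov--Chentsov theorem applies on $[0,T]$ whenever $2mH>1$ and produces a modification of $B^H$ whose restriction to $[0,T]$ is Hölder continuous of every order strictly less than $(2mH-1)/(2m)=H-\frac{1}{2m}$. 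Since we work with the continuous version of the fractional Brownian motion, this modification is $B^H$ itself. Hence for every $m$ with $2mH>1$ there is a full-measure event $\varOmega_{m,T}$ on which
\begin{equation*}
\big|B^H(t)-B^H(s)\big| \le C_{m,T}(\omega)\,|t-s|^{H-1/(2m)} \quad \text{for all } 0\le s\le t\le T.
\end{equation*}

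Next I would remove the dependence on the discrete parameters by a countable intersection. Put $\varOmega' := \bigcap_{N\ge 1}\bigcap_{m:\,2mH>1}\varOmega_{m,N}$, which still has probability one. Fix $\omega\in\varOmega'$ and arbitrary $T>0$, $\gamma>0$. Choose $N\in\mathbb N$ with $N\ge T$ (and $N\ge 1$) and $m\in\mathbb N$ with $2mH>1$ and $\frac{1}{2m}<\gamma$. Then, for $0\le s\le t\le T\le N$, using $|t-s|\le N$ and that $\gamma-\frac{1}{2m}>0$,
\begin{equation*}
\big|B^H(t)-B^H(s)\big| \le C_{m,N}(\omega)\,|t-s|^{H-1/(2m)} = C_{m,N}(\omega)\,|t-s|^{H-\gamma}\,|t-s|^{\gamma-1/(2m)} \le C_{m,N}(\omega)\,N^{\gamma-1/(2m)}\,|t-s|^{H-\gamma},
\end{equation*}
so the assertion holds with $C(\omega,T,\gamma):=C_{m,N}(\omega)\,N^{\gamma-1/(2m)}$, which is finite and positive.

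The only point requiring care is the simultaneous treatment of all $T$ and all $\gamma$ on one almost-sure event; this is handled by the countable intersection together with the obvious monotonicity in $T$ (enlarging the interval to an integer bound) and in $\gamma$ (replacing $\frac{1}{2m}$ by the larger $\gamma$). The remaining ingredients --- the moment identity and the invocation of Kolmogorov--Chentsov --- are standard; alternatively, one may obtain the same bound with an explicit constant by applying the Garsia--Rodemich--Rumsey inequality to $t\mapsto B^H(t)$ with $\Psi(x)=x^{2m}$ and $p(u)=u^{H-1/(2m)}$.
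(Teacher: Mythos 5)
Your argument is correct, but note that the paper does not actually prove this lemma: it is quoted as a well-known property of fractional Brownian motion with a reference to \cite{MishurafBm}, where it is obtained by essentially the route you follow (Gaussian moment bounds of every even order combined with a Kolmogorov--Chentsov type argument, or alternatively the Garsia--Rodemich--Rumsey inequality, which you also mention). So your proposal supplies a self-contained proof of a statement the paper imports from the literature, and its structure --- the identity $\E\llvert B^H(t)-B^H(s)\rrvert^{2m}=c_m\llvert t-s\rrvert^{2mH}$, Hölder exponents approaching $H$ as $m\to\infty$, a countable intersection over integer horizons $N$ and over $m$ to obtain one full-measure event valid simultaneously for all $T>0$ and $\gamma>0$, and absorption of the excess power $\llvert t-s\rrvert^{\gamma-1/(2m)}\le N^{\gamma-1/(2m)}$ into the constant --- is exactly the standard one; it also covers $\gamma\ge H$ without extra work. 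The identification of the modification with $B^H$ itself is legitimate because two continuous modifications of one another are indistinguishable.

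One small indexing slip: from the moment of order $2m$, Kolmogorov--Chentsov gives Hölder continuity of every order \emph{strictly less} than $H-\frac{1}{2m}$, so you cannot take the exponent $H-\frac{1}{2m}$ itself on the event produced by that application, as your displayed bound on $\varOmega_{m,T}$ does. The repair is immediate and does not affect anything else: obtain the exponent $H-\frac{1}{2m}$ by applying the theorem with the moment of order $4m$ (admissible orders are then all $\theta<H-\frac{1}{4m}$, and $H-\frac{1}{2m}<H-\frac{1}{4m}$), or simply define $\varOmega_{m,N}$ as the event on which $B^H$ is Hölder of some fixed order $\theta_m\uparrow H$ chosen admissible for the $2m$-th moment. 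With this cosmetic change the countable-intersection argument and the final choice of $m$ with $\frac{1}{2m}<\gamma$ go through unchanged.
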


\begin{lemma}\label{nonnegativity everywhere}
The process $Y=\{Y(t),~t\ge0\}$ is non-negative a.s., so
\[
\bigl\{t\ge0~|~Y(t)\le0\bigr\}=\bigl\{t\ge0~|~Y(t)=0\bigr\} \quad a.s.
\]
\end{lemma}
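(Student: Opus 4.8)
The plan is to argue by contradiction, exploiting the fact that on the region $\{\Y\le0\}$ the indicator in \eqref{new equation} disappears, so the drift of $\Y$ becomes $\frac{k}{\varepsilon}-a\Y(s)\ge\frac{k}{\varepsilon}$, an enormous upward push for small $\varepsilon$; the only thing that can keep $\Y$ below zero over a time span of length $u$ is a fluctuation of $\sigma B^H$ of order $u^{H-\gamma}$, and since $H-\gamma<1$ such a fluctuation is negligible compared with $\frac{k}{\varepsilon}u$ once $\varepsilon$ is small. Fix once and for all an $\omega$ in the full-probability set on which $B^H$ satisfies the local H\"older estimate of Lemma \ref{Holder fBm} and on which $Y$ is finite (Corollary \ref{existence of the limit}); fix $T>0$ and $\gamma\in(0,H)$, and let $C=C(\omega,T,\gamma)$ be the corresponding H\"older constant. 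Suppose, towards a contradiction, that $Y(t_0)<0$ for some $t_0\in[0,T]$; since $\Y(t_0)$ increases to $Y(t_0)$ as $\varepsilon\downarrow0$ (Lemma \ref{comparison}), we then have $\Y(t_0)<0$ for every $\varepsilon>0$.

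Fix such an $\varepsilon$. Since $\Y$ is continuous with $\Y(0)=Y_0>0$ and $\Y(t_0)<0$, the set $\{s\in[0,t_0]\mid\Y(s)=0\}$ is nonempty and closed, so $s_0:=\sup\{s\in[0,t_0]\mid\Y(s)=0\}$ is well defined, $s_0\in(0,t_0)$, $\Y(s_0)=0$ and $\Y(s)<0$ for all $s\in(s_0,t_0]$. On this last excursion below zero the indicator vanishes and $-a\Y(s)>0$, hence, putting $u:=t_0-s_0>0$, equation \eqref{new equation} yields
\[
\Y(t_0)=\int_{s_0}^{t_0}\Bigl(\frac{k}{\varepsilon}-a\Y(s)\Bigr)ds+\sigma\bigl(\B{t_0}-\B{s_0}\bigr)\ge\frac{k}{\varepsilon}\,u-\sigma C\,u^{H-\gamma}.
\]
The right-hand side is at least $\inf_{u>0}\bigl(\frac{k}{\varepsilon}u-\sigma C u^{H-\gamma}\bigr)$; an elementary one-variable minimisation shows that this infimum equals $-(1-H+\gamma)\sigma C\,u_*^{H-\gamma}$ with $u_*=\left((H-\gamma)\sigma C\varepsilon/k\right)^{1/(1-H+\gamma)}$, which tends to $0$ as $\varepsilon\downarrow0$. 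Thus $\Y(t_0)\ge-c(\omega)\,\varepsilon^{(H-\gamma)/(1-H+\gamma)}$ with a constant $c(\omega)$ not depending on $\varepsilon$ or on the (random) location of the excursion — and this lower bound holds for every $\varepsilon>0$ (trivially when $\Y(t_0)\ge0$, which in fact cannot happen here). Letting $\varepsilon\downarrow0$ gives $Y(t_0)=\lim_{\varepsilon\to0}\Y(t_0)\ge0$, contradicting $Y(t_0)<0$.

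Therefore $Y(t_0)\ge0$ for all $t_0\in[0,T]$; since $T>0$ is arbitrary and the exceptional null set depends neither on $T$ nor on $t_0$, this proves that $Y(t)\ge0$ for all $t\ge0$ a.s., whence the identity $\{t\ge0\mid Y(t)\le0\}=\{t\ge0\mid Y(t)=0\}$ a.s. is immediate. I expect the only delicate point to be the competition between drift and noise on short excursions: one must invoke a \emph{uniform} H\"older estimate (Lemma \ref{Holder fBm}), with a single constant $C$ valid for all $0\le s\le t\le T$, rather than mere continuity, and one must check that $s_0$ is truly the last zero of $\Y$ before $t_0$ so that the indicator is genuinely $0$ on all of $[s_0,t_0]$; the remaining optimisation in $u$ is routine.
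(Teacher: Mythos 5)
Your proof is correct and takes essentially the same route as the paper: you locate the last zero of $\Y$ before the putative point of negativity, bound the increment from below by $\frac{k}{\varepsilon}u-\sigma C u^{H-\gamma}$ using the uniform H\"older estimate of Lemma \ref{Holder fBm}, and minimise in $u$ to obtain a lower bound that vanishes as $\varepsilon\downarrow 0$. The paper argues identically with $\gamma=H/2$, starting the excursion at the last time $\Y>0$ and additionally introducing a right endpoint via Lemma \ref{measurelemma}, which your version shows is not really needed.
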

\begin{proof}
Let an arbitrary $\omega\in\varOmega'$ from Lemma \ref{Holder fBm} be
fixed and assume that there is such $\tau>0$ that $Y(\tau)\le0$. Then,
for all $\varepsilon>0$:
\begin{equation*}
\Y(\tau)<Y(\tau)\le0.
\end{equation*}

Let an arbitrary $\varepsilon>0$ be fixed. Denote
\begin{equation*}
\begin{gathered} \tau_-(\varepsilon):=\sup\bigl\{t\in(0,\tau)~|~
\Y(t)>0\bigr\},
\\
\tau_+(\varepsilon):=\inf\bigl\{t\in(\tau,\infty)~|~\Y(t)>0\bigr\}.
\end{gathered} %
\end{equation*}
Note that, due to continuity\index{continuity} of $\Y$ and Lemma \ref{measurelemma},
$0<\tau_-(\varepsilon)<\tau< \tau_+(\varepsilon) < \infty$.

It is clear that for all $t\in(\tau_-(\varepsilon), \tau_+(\varepsilon
))$: $\Y(t)<0$, therefore $\mathbbm1_{\{\Y(t)>0\}}=0$ and, due to
Lemma \ref{Holder fBm}, there is such $C>0$ that for all $t\in(\tau
_-(\varepsilon), \tau_+(\varepsilon))$:
%
\begin{gather} \label{nonneg ineq} %
0<\Y(t) = \Y(t)-\Y\bigl(\tau_-(
\varepsilon)\bigr)\nonumber
\\
=\int_{\tau_-(\varepsilon)}^t \frac{k}{\varepsilon}ds - a\int_{\tau_-(\varepsilon)}^t
\Y(s)ds + \sigma\xch{\big(B^H(t)-B^H \bigl(\tau_-(\varepsilon)\bigr)\big)}{(B^H(t)-B^H \bigl(\tau_-(\varepsilon)\bigr)}\nonumber
\\
\ge\frac{k}{\varepsilon} \bigl(t-\tau_-(\varepsilon) \bigr) - C\sigma \bigl(t-
\tau_-(\varepsilon)\bigr)^{H/2}.
\end{gather} 
It is sufficient to prove that
\begin{equation*}
F(\varepsilon) := \min_{t\in[\tau_-(\varepsilon), \tau_+(\varepsilon)]} \biggl(\frac{k}{\varepsilon}
\bigl(t-\tau_-(\varepsilon) \bigr) - C\sigma \bigl(t-\tau_-(\varepsilon)
\bigr)^{H/2} \biggr) \to0, \quad\varepsilon\to0.
\end{equation*}

Indeed,
\begin{equation}
\label{nonneg deriv} \biggl(\frac{k}{\varepsilon} \bigl(t-\tau_-(\varepsilon) \bigr) - C
\sigma \bigl(t-\tau_-(\varepsilon)\bigr)^{H/2} \biggr)_t'
= \frac{k}{\varepsilon} -\frac
{CH\sigma}{2}\bigl(t-\tau_-(\varepsilon)
\bigr)^{\frac{H-2}{2}}.
\end{equation}
Equating the right-hand side of \eqref{nonneg deriv} to 0 and solving
the equation with respect to~$t$, we obtain
\begin{equation*}
t_* = \tau_-(\varepsilon)+C_1 \varepsilon^{\frac{2}{2-H}},
\end{equation*}
where $C_1=\left(\frac{CH\sigma}{2k}\right)^{\frac{2}{2-H}}$.

It is easy to check that the second derivative of the right-hand side
of \eqref{nonneg ineq} is positive on $(\tau_-(\varepsilon),\tau
_+(\varepsilon))$, so $t_*$ is indeed its point of minimum. Therefore
%
%
\begin{gather*}
F(\varepsilon) = \frac{k}{\varepsilon} \bigl(t_*-
\tau_-(\varepsilon ) \bigr) - C\sigma\bigl(t_*-\tau_-(\varepsilon)
\bigr)^{H/2} = \frac{k}{\varepsilon
}C_1 \varepsilon^{\frac{2}{2-H}}
- C\sigma\bigl(C_1 \varepsilon^{\frac
{2}{2-H}}\bigr)^{H/2}
\\
= kC_1 \varepsilon^{\frac{H}{2-H}} - CC_1^{H/2}
\sigma\varepsilon^{\frac
{H}{2-H}} \to0, \quad\varepsilon\to0.\qedhere
\end{gather*}
\end{proof}

\begin{remark}
It is clear that for all $t\in[0,T]$:
\begin{equation*}
Y(t) \ge Y_0 + \int_0^t
\frac{k}{Y(s)}ds - a\int_0^t Y(s)ds +\sigma
B_t^H.
\end{equation*}
\end{remark}

\begin{lemma}\label{continuity wrt to eps}
For any $\varepsilon^*>0$ and all $t>0$:
\begin{equation*}
\lim_{\varepsilon\to\varepsilon^*}\Y(t) = Y_{\varepsilon^*}(t).
\end{equation*}
\end{lemma}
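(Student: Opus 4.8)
The plan is to fix $\omega \in \Omega'$ (the event from Lemma~\ref{Holder fBm}) and an arbitrary $t > 0$, and to control the difference $\Y(t) - Y_{\varepsilon^*}(t)$ directly from the integral equation \eqref{new equation}. Write $\Delta_\varepsilon(s) := \Y(s) - Y_{\varepsilon^*}(s)$; since the fractional Brownian motion terms cancel, one gets
\begin{equation*}
\Delta_\varepsilon(t) = \int_0^t \left( \fr{s} - \frac{k}{Y_{\varepsilon^*}(s)\mathbbm{1}_{\{Y_{\varepsilon^*}(s)>0\}} + \varepsilon^*} \right) ds - a \int_0^t \Delta_\varepsilon(s)\, ds .
\end{equation*}
The monotonicity from Lemma~\ref{comparison} gives one-sided control for free: if $\varepsilon < \varepsilon^*$ then $\Y(s) > Y_{\varepsilon^*}(s)$ for all $s > 0$, so $\Delta_\varepsilon(s) > 0$, and symmetrically for $\varepsilon > \varepsilon^*$. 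So the task reduces to showing $\Delta_\varepsilon(t) \to 0$; by monotonicity it even suffices to handle $\varepsilon \uparrow \varepsilon^*$ and $\varepsilon \downarrow \varepsilon^*$ separately, and in each case $|\Delta_\varepsilon(t)|$ is monotone in $\varepsilon$, so the limit exists — one only needs to rule out a strictly positive limit.

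The key step is to bound the first integrand. Fix $\varepsilon$ in a neighbourhood of $\varepsilon^*$, say $\varepsilon^*/2 \le \varepsilon \le 2\varepsilon^*$. Both denominators are at least $\varepsilon^*/2 > 0$, so the map $u \mapsto k/(u^+ + \varepsilon)$ and the difference in $\varepsilon$ are both Lipschitz there: pointwise,
\begin{equation*}
\left| \fr{s} - \frac{k}{Y_{\varepsilon^*}(s)\mathbbm{1}_{\{Y_{\varepsilon^*}(s)>0\}} + \varepsilon^*} \right| \le \frac{4k}{(\varepsilon^*)^2}\, |\Delta_\varepsilon(s)| + \frac{4k}{(\varepsilon^*)^2}\, |\varepsilon - \varepsilon^*| .
\end{equation*}
Plugging this into the expression for $\Delta_\varepsilon(t)$ and taking absolute values yields
\begin{equation*}
|\Delta_\varepsilon(t)| \le \frac{4kt}{(\varepsilon^*)^2}\, |\varepsilon - \varepsilon^*| + \left( \frac{4k}{(\varepsilon^*)^2} + a \right) \int_0^t |\Delta_\varepsilon(s)|\, ds ,
\end{equation*}
and Grönwall's inequality gives $|\Delta_\varepsilon(t)| \le \frac{4kt}{(\varepsilon^*)^2}\, |\varepsilon - \varepsilon^*|\, \exp\!\big( \big( \tfrac{4k}{(\varepsilon^*)^2} + a \big) t \big) \to 0$ as $\varepsilon \to \varepsilon^*$. (Here I am using that $Y_{\varepsilon^*}$, and for $\varepsilon$ near $\varepsilon^*$ also $\Y$, are finite continuous paths, so the Lebesgue integrals involved are finite — this is where the corollaries of Theorem~\ref{upper boundary} are invoked.)

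The main obstacle, and the reason one cannot simply quote a standard continuous-dependence theorem, is that the drift $f_\varepsilon(y) = k/(y\mathbbm{1}_{\{y>0\}} + \varepsilon) - ay$ is \emph{not} uniformly Lipschitz in $y$: as $\varepsilon \downarrow 0$ the Lipschitz constant blows up. The point is that we only need continuity at a \emph{fixed} $\varepsilon^* > 0$, so restricting to $\varepsilon \in [\varepsilon^*/2, 2\varepsilon^*]$ makes every denominator bounded below by $\varepsilon^*/2$ and the argument becomes a routine Grönwall estimate; no control of the zero set of $\Y$ or $Y_{\varepsilon^*}$ is needed because the indicator function $\mathbbm{1}_{\{\cdot > 0\}}$ only ever appears inside a denominator that already has the additive constant $\varepsilon$ (resp.\ $\varepsilon^*$), so the integrand stays bounded regardless of sign. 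One small bookkeeping point: the bound $|x^+/(x^+ + \varepsilon) - y^+/(y^+ + \varepsilon^*)|$ must be handled by splitting into the change in the numerator-type term and the change in $\varepsilon$; using $|a^+ - b^+| \le |a-b|$ and the elementary estimate $|\tfrac{1}{p} - \tfrac{1}{q}| \le |p-q|/(pq)$ with $p,q \ge \varepsilon^*/2$ makes this clean.
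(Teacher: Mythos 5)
Your proof is correct, but it goes by a genuinely different route than the paper. The paper exploits the monotonicity in $\varepsilon$ established via Lemma \ref{comparison}: it defines the one-sided limits $Z_{\pm}(t)=\lim_{\varepsilon\downarrow\varepsilon^*}\Y(t)$ and $\lim_{\varepsilon\uparrow\varepsilon^*}\Y(t)$, passes to the limit inside the integral equation \eqref{new equation} (using the uniform bound $\frac{1}{\Y(s)\mathbbm{1}_{\{\Y(s)>0\}}+\varepsilon}\le\frac{1}{\varepsilon^*}$, resp. $\frac{2}{\varepsilon^*}$, to justify the convergence of the drift integral), and then identifies $Z_{\pm}$ with $Y_{\varepsilon^*}$ through uniqueness of the solution of the $\varepsilon^*$-equation. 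You instead prove continuous dependence directly: after cancelling the common noise, you use that for $\varepsilon\in[\varepsilon^*/2,2\varepsilon^*]$ both denominators are bounded below by $\varepsilon^*/2$, so that $(y,\varepsilon)\mapsto k/(y\mathbbm{1}_{\{y>0\}}+\varepsilon)$ is Lipschitz there (via $|y^+ - z^+|\le|y-z|$ and $|\frac1p-\frac1q|\le|p-q|/(pq)$), and a Gr\"onwall argument gives $|\Y(t)-Y_{\varepsilon^*}(t)|\le \frac{4kt}{(\varepsilon^*)^2}|\varepsilon-\varepsilon^*|\,e^{(4k/(\varepsilon^*)^2+a)t}$. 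Your estimates check out, and your approach buys something the paper's does not: a quantitative (locally Lipschitz in $\varepsilon$) rate, and it treats the two one-sided limits simultaneously without relying on monotonicity or on the uniqueness-based identification step; the paper's argument, in turn, is softer and reuses machinery (comparison, monotone/dominated convergence, uniqueness) already in place. Two small remarks: fixing $\omega\in\varOmega'$ from Lemma \ref{Holder fBm} is unnecessary — plain pathwise continuity of $\Y$ and $Y_{\varepsilon^*}$ suffices for the Gr\"onwall step — and the aside about reducing to monotone one-sided limits is likewise not needed in your scheme.
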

\begin{proof}
Indeed, denote
\begin{equation*}
\lim_{\varepsilon\downarrow\varepsilon^*}\Y(t) = Z_+(t) \le Y_{\varepsilon^*}(t).
\end{equation*}
It is clear that for all $t\ge0$, $\Y(t) \uparrow Z_+(t)$, $\varepsilon
\downarrow\varepsilon^*$, so for any $t>0$:
\begin{equation*}
\lim_{\varepsilon\downarrow\varepsilon^*}\int_0^t \Y(s) ds
= \int_0^t Z_+(s)ds.
\end{equation*}
Moreover, for all $\varepsilon>\varepsilon^*$:
\begin{equation*}
\frac{1}{\Y(t)\mathbbm1_{\{\Y(t)>0\}} +\varepsilon} \le\frac
{1}{\varepsilon^*},
\end{equation*}
therefore
\begin{equation*}
\lim_{\varepsilon\downarrow\varepsilon^*} \int_0^t
\frac{1}{\Y
(s)\mathbbm1_{\{\Y(s)>0\}} +\varepsilon}ds = \int_0^t
\frac
{1}{Z_+(s)\mathbbm1_{\{Z_+(s)>0\}} +\varepsilon^*}ds
\end{equation*}
and hence
%
\begin{gather}\label{Zplus} %
 Z_+(t) = \lim_{\varepsilon\downarrow\varepsilon^*}
\Y(t)\nonumber
\\
=Y_0 + \lim_{\varepsilon\downarrow\varepsilon^*} \int_0^t
\frac{k}{\Y
(s)\mathbbm1_{\{\Y(s)>0\}} +\varepsilon}ds\nonumber
\\
-a\lim_{\varepsilon\downarrow\varepsilon^*} \int_0^t
\frac{1}{\Y
(s)\mathbbm1_{\{\Y(s)>0\}} +\varepsilon}ds +\sigma B^H(t)\nonumber
\\
=Y_0+\int_0^t
\frac{1}{Z_+(s)\mathbbm1_{\{Z_+(s)>0\}} +\varepsilon
^*}ds - a\int_0^t Z_+(s)ds +
\sigma B^H(t).
\end{gather} %

It is known that $Y_{\varepsilon^*}$ is the unique solution to the
equation \eqref{Zplus}, therefore for all $t\ge0$:
\begin{equation*}
\lim_{\varepsilon\downarrow\varepsilon^*}\Y(t) = Y_{\varepsilon^*}(t).
\end{equation*}

Next, denote
\begin{equation*}
\lim_{\varepsilon\uparrow\varepsilon^*}\Y(t) = Z_-(t) \ge Y_{\varepsilon^*}(t).
\end{equation*}
For all $t\ge0$, $\Y(t) \downarrow Z_-(t)$, $\varepsilon\uparrow
\varepsilon^*$, so
\begin{equation*}
\lim_{\varepsilon\uparrow\varepsilon^*}\int_0^t \Y(s) ds
= \int_0^t Z_-(s)ds
\end{equation*}
and for all $\varepsilon\in (\frac{\varepsilon^*}{2}, \varepsilon
^* )$:
\begin{equation*}
\frac{1}{\Y(t)\mathbbm1_{\{\Y(t)>0\}} +\varepsilon} \le\frac
{2}{\varepsilon^*},
\end{equation*}
so
\begin{equation*}
\lim_{\varepsilon\uparrow\varepsilon^*} \int_0^t
\frac{1}{\Y(s)\mathbbm
1_{\{\Y(s)>0\}} +\varepsilon}ds = \int_0^t
\frac{1}{Z_-(s)\mathbbm1_{\{
Z_-(s)>0\}} +\varepsilon^*}ds.
\end{equation*}

Therefore, similarly to \eqref{Zplus}, $Z_-$ satisfies the same
equation as $Y_{\varepsilon^*}$, so
\begin{equation*}
\lim_{\varepsilon\uparrow\varepsilon^*}\Y(t) = Y_{\varepsilon^*}(t).\qedhere
\end{equation*}
\end{proof}

\begin{theorem}\label{a.e. continuity}
Let $Y=\{Y(t), t \ge0\}$ be the process defined by the formula \eqref
{limit process}. Then

1) the set $\{t > 0 | Y(t)>0\}$ is open in the natural topology\index{natural topology} on
$\mathbb R$;

2) $Y$ is continuous on $\{t\ge0~|~Y(t)>0\}$.
\end{theorem}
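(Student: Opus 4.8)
The plan is to use that $Y$ is \emph{lower semicontinuous}. By Lemma~\ref{comparison} the map $\varepsilon\mapsto Y_\varepsilon(t)$ is decreasing, so $Y_\varepsilon\uparrow Y$ pointwise as $\varepsilon\downarrow0$ and $Y=\sup_{\varepsilon>0}Y_\varepsilon$ is a pointwise supremum of the continuous functions $Y_\varepsilon$; hence $\{Y>0\}$ is open in $[0,\infty)$, and intersecting with $(0,\infty)$ yields part~1). The same remark shows that for any compact $I\subset\{Y>0\}$ the infimum $m:=\min_{t\in I}Y(t)$ is attained and strictly positive.

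For part~2), fix $t_0$ with $Y(t_0)>0$. By openness choose a closed interval $I=[c,d]\ni t_0$ (allowing $c=0$ when $t_0=0$) with $I\subset\{Y>0\}$, and set $m:=\min_I Y>0$. The key step is to upgrade the pointwise convergence $Y_\varepsilon\uparrow Y\ge m$ on $I$ to a \emph{uniform} lower bound. Pick $\varepsilon_n\downarrow0$; the sets $K_n:=\{t\in I\mid Y_{\varepsilon_n}(t)\le m/2\}$ are closed, nested decreasing (since $Y_{\varepsilon_n}$ increases in $n$), and $\bigcap_n K_n=\emptyset$, because a common point $t^*$ would satisfy $Y(t^*)=\sup_n Y_{\varepsilon_n}(t^*)\le m/2<m\le Y(t^*)$. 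By compactness of $I$ some $K_N$ is empty, so with $\varepsilon_0:=\varepsilon_N$ we get $Y_\varepsilon(t)\ge m/2$ for every $t\in I$ and every $\varepsilon\le\varepsilon_0$.

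For such $\varepsilon$ the indicator in \eqref{new equation} equals $1$ throughout $I$, so on $I$
\[
Y_\varepsilon(t)=Y_\varepsilon(c)+\int_c^t\frac{k}{Y_\varepsilon(s)+\varepsilon}\,ds-a\int_c^t Y_\varepsilon(s)\,ds+\sigma\bigl(B^H(t)-B^H(c)\bigr).
\]
Here $0\le k/(Y_\varepsilon(s)+\varepsilon)\le 2k/m$ and, by Theorem~\ref{upper boundary} with $r=1$, $|Y_\varepsilon(s)|\le M$ on $[0,T]$ for an $\varepsilon$-independent (a.s.\ finite) constant $M$; combining these with the H\"older continuity of $B^H$ from Lemma~\ref{Holder fBm} shows that $\{Y_\varepsilon:\varepsilon\le\varepsilon_0\}$ is uniformly equicontinuous on $I$. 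A pointwise limit of a uniformly equicontinuous family is continuous, hence $Y$ is continuous on $I$, in particular at $t_0$; since $t_0\in\{Y>0\}$ was arbitrary, this proves part~2). (Equivalently one may let $\varepsilon\to0$ in the displayed equation, using Remark~\ref{monotone conv of int} for $\int_c^t Y_\varepsilon$ and dominated convergence for $\int_c^t k/(Y_\varepsilon+\varepsilon)$, to obtain $Y(t)=Y(c)+\int_c^t k/Y(s)\,ds-a\int_c^t Y(s)\,ds+\sigma(B^H(t)-B^H(c))$ on $I$, whose right-hand side is manifestly continuous.)

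The step I expect to be the main obstacle is exactly the one above: extracting a uniform-in-$t$ lower bound for $Y_\varepsilon$ from mere pointwise convergence on a positivity interval. Once one recognises this as a Dini-type compactness phenomenon the rest is routine; only the bookkeeping of passing to a closed subinterval on which $Y$ is simultaneously bounded below (by $m$) and above (by the deterministic envelope from Theorem~\ref{upper boundary}) needs care, since that is what makes the drift coefficient $k/(Y_\varepsilon+\varepsilon)$ controllable uniformly in $\varepsilon$.
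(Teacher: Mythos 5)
Your argument is correct, but the mechanism behind the key step is genuinely different from the paper's. Part 1) is handled essentially the same way in both texts: you phrase it as lower semicontinuity of $Y=\sup_{\varepsilon>0}Y_\varepsilon$, while the paper uses continuity of $Y_\varepsilon$ together with monotonicity in $\varepsilon$ (Lemma \ref{comparison}); these are the same observation. For part 2) the paper fixes a small interval around $t^*$ on which all $Y_\varepsilon$ with $\varepsilon<\varepsilon^*$ are positive and then proves directly that $\sup_t\bigl(Y(t)-Y_\varepsilon(t)\bigr)\to0$ there, via a delicate first-crossing argument: assuming $Y_\varepsilon-Y_{\varepsilon_0}$ exceeds a level $\theta$ somewhere, it differentiates the difference at the first crossing time and derives a contradiction, after which continuity of $Y$ follows as a uniform limit of continuous functions. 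You instead extract, by a Dini-type nested-compact-sets argument (which uses only continuity of the prelimit processes and lower semicontinuity of $Y$, so there is no circularity), a uniform lower bound $Y_\varepsilon\ge m/2$ on a compact subinterval of the positivity set; from there, the $\varepsilon$-independent pathwise bound of Theorem \ref{upper boundary}, the estimate $k/(Y_\varepsilon+\varepsilon)\le 2k/m$ and Lemma \ref{Holder fBm} give uniform equicontinuity, or alternatively one passes to the limit in \eqref{new equation} using Remark \ref{monotone conv of int} and dominated convergence. Your route is shorter, treats $t_0=0$ on the same footing as interior points (the paper does this separately in its Step 3, by essentially your dominated-convergence variant), and yields as a byproduct the integral equation for $Y$ on compact subintervals of intervals of positiveness, which the paper only obtains later (Theorem \ref{lims and eqs}) via Dini's theorem once continuity is already known. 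What the paper's heavier Step 2 buys is the explicit uniform convergence $Y_\varepsilon\to Y$ near $t^*$ stated on its own; for the theorem as stated, your compactness argument suffices.
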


\begin{proof}

We shall divide the proof into 3 steps.

\textbf{Step 1.} Let $\omega\in\varOmega$ be fixed. Consider an arbitrary
$t^* \in\{t > 0 | Y(t)>0\}$. As $\Y(t^*) \to Y(t^*)$, $\varepsilon\to
0$, there exists such $\varepsilon^*>0$ that for all $\varepsilon
<\varepsilon^*$: $\Y(t^*)>0$. From continuity\index{continuity} of $\Y$ with respect to
$t$ and their monotonicity with respect to $\varepsilon$, it follows
that there exists such $\delta^* = \delta^*(t^*)>0$ that
\begin{equation*}
\forall\varepsilon<\varepsilon^*, ~\forall t \in\bigl(t^*-\delta^*, t^*+\delta^*
\bigr): \quad\Y(t)>0.
\end{equation*}

Hence for all $t \in(t^*-\delta^*, t^*+\delta^*)$: $Y(t)>0$ and
therefore the set $\{t > 0 | Y(t)>0\}$ is open.

\textbf{Step 2.} Let us prove that
\begin{equation*}
\sup_{t\in(t^*-\delta^*, t^*+\delta^*)} \bigl(Y(t) - Y_{\varepsilon
}(t) \bigr)\to0, \quad
\varepsilon\to0,
\end{equation*}
and therefore $Y$ is continuous on the interval $(t^*-\delta^*,
t^*+\delta^*)$.

It is enough to prove that for any $\theta>0$ there exists such
$\varepsilon_0=\varepsilon_0(\theta)>0$ that for all $\varepsilon
<\varepsilon_0$:
\begin{equation*}
\sup_{t\in(t^*-\delta^*, t^*+\delta^*)} \bigl(Y(t) - Y_{\varepsilon
}(t) \bigr)\le\theta.
\end{equation*}

Indeed, let us fix an arbitrary $\theta>0$. From the definition of $Y$
it follows that $Y_\varepsilon(t^*-\delta^*) \to Y (t^*-\delta^*)$ as
$\varepsilon\to0$, so there is such $\varepsilon^{**}<\varepsilon^*$
that for all $\varepsilon<\varepsilon^{**}$ the following inequality holds:
\begin{equation*}
Y \bigl(t^*-\delta^*\bigr) - Y_\varepsilon\bigl(t^*-\delta^*\bigr) <\theta.
\end{equation*}

Denote
\begin{equation*}
\begin{aligned} \varepsilon_1 := \min \bigl\{\theta, ~
\sup\bigl\{\varepsilon^{**} \in\bigl(0, \varepsilon^*\bigr)~|~\forall
\varepsilon\in\bigl(&0,\varepsilon^{**}\bigr):
\\
&Y \bigl(t^*-\delta^*\bigr) - Y_\varepsilon\bigl(t^*-\delta^*\bigr) <\theta
\bigr\} \bigr\}. \end{aligned} %
\end{equation*}
As $\varepsilon_1 \le\theta$, there exists such $C\in(0,1]$ that
$\varepsilon_1 = C\theta$.

From the continuity\index{continuity} with respect to $\varepsilon$,
\begingroup
\abovedisplayskip=8.5pt
\belowdisplayskip=8.5pt
\begin{equation*}
Y \bigl(t^*-\delta^*\bigr) - Y_{\varepsilon_1} \bigl(t^*-\delta^*\bigr) \le\theta
\end{equation*}
and, from the monotonicity with respect to $\varepsilon$, for all
$\varepsilon<\varepsilon_1$:
\begin{equation*}
0 \le Y_\varepsilon\bigl(t^*-\delta^*\bigr) - Y_{\varepsilon_1} \bigl(t^*-
\delta^*\bigr) \le\theta.
\end{equation*}

It is obvious that $Y_\varepsilon(t^*-\delta^*) - Y_{\varepsilon_1}
(t^*-\delta^*) \downarrow0$ as $\varepsilon\uparrow\varepsilon_1$, so
let us denote
\begin{equation*}
\begin{aligned} \varepsilon_0 := \sup \biggl\{
\varepsilon\in(0,\varepsilon_1)~ |~Y_\varepsilon\bigl(t^*-
\delta^*\bigr) &- Y_{\varepsilon_1}\bigl(t^*-\delta^*\bigr)
\\
&\ge\frac{Y (t^*-\delta^*) - Y_{\varepsilon_1} (t^*-\delta^*)}{2} \biggr\}. \end{aligned} %
\end{equation*}

It is obvious that
\begin{equation*}
Y_{\varepsilon_0}\bigl(t^*-\delta^*\bigr) - Y_{\varepsilon_1}\bigl(t^*-\delta^*
\bigr) = \frac{Y (t^*-\delta^*) - Y_{\varepsilon_1} (t^*-\delta^*)}{2}
\end{equation*}
and therefore
\begin{equation*}
\begin{gathered} Y \bigl(t^*-\delta^*\bigr) - Y_{\varepsilon_0}
\bigl(t^*-\delta^*\bigr)
\\
= \bigl(Y \bigl(t^*-\delta^*\bigr) - Y_{\varepsilon_1} \bigl(t^*-\delta^*\bigr)
\bigr) - \bigl(Y_{\varepsilon_0}\bigl(t^*-\delta^*\bigr) - Y_{\varepsilon_1}
\bigl(t^*-\delta^*\bigr) \bigr)
\\
= \frac{Y (t^*-\delta^*) - Y_{\varepsilon_1} (t^*-\delta^*)}{2} \le \frac{\theta}{2}. \end{gathered} %
\end{equation*}

Moreover, for all $\varepsilon<\varepsilon_0$:
\begin{equation*}
Y_\varepsilon\bigl(t^*-\delta^*\bigr) - Y_{\varepsilon_0}\bigl(t^*-\delta^*
\bigr) \le\frac
{\theta}{2}.
\end{equation*}

Now consider an arbitrary $\varepsilon<\varepsilon_0$ and assume that
there is such $\tau_0\in(t^*-\delta^*, t^*+\delta^*)$ that
\begin{equation*}
Y_\varepsilon(\tau_0) - Y_{\varepsilon_0}(\tau_0)>
\theta.
\end{equation*}

Denote
\begin{equation*}
\tau:= \inf\bigl\{t\in\bigl(t^*-\delta^*, \tau_0\bigr)~|~\forall s
\in(t,\tau_0): ~Y_\varepsilon(s) - Y_{\varepsilon_0}(s)>\theta
\bigr\}<\tau_0.
\end{equation*}
From the definition of $\tau_0$ and $\tau$, for all $t\in(\tau,\tau_0)$:
\begin{equation}
\label{inequality for uniform conv} Y_\varepsilon(t) - Y_{\varepsilon_0}(t)>\theta.
\end{equation}

However, as for all $t\in(t^*-\delta^*, t^*+\delta^*)$ and for all
$\varepsilon<\varepsilon^*$ it is true that $\mathbbm1_{\{
Y_{\varepsilon}(t)>0\}}=1$,
%
\begin{gather}\label{delta der 1} %
\bigl(Y_\varepsilon(\tau) -
Y_{\varepsilon_0}(\tau) \bigr)'\nonumber
\\
= \biggl(\frac{k}{Y_{\varepsilon}(\tau)\mathbbm1_{\{Y_{\varepsilon}(\tau
)>0\}}+\varepsilon} - \frac{k}{Y_{\varepsilon_2}(\tau)\mathbbm1_{\{
Y_{\varepsilon_0}(\tau)>0\}}+\varepsilon_0} \biggr)\nonumber
\\
- a \bigl(Y_{\varepsilon}(\tau) - Y_{\varepsilon_0}(\tau) \bigr)\nonumber
\\
= \biggl(\frac{k}{Y_{\varepsilon}(\tau)+\varepsilon} - \frac
{k}{Y_{\varepsilon_0}(\tau)+\varepsilon_0} \biggr) - a
\bigl(Y_{\varepsilon}(\tau) - Y_{\varepsilon_0}(\tau) \bigr)\nonumber
\\
= \frac{k(Y_{\varepsilon_0}(\tau) - Y_{\varepsilon}(\tau)) +
k(\varepsilon_0-\varepsilon)}{(Y_{\varepsilon}(\tau)+\varepsilon
)(Y_{\varepsilon_0}(\tau)+\varepsilon_0)} - a \bigl(Y_{\varepsilon}(\tau ) - Y_{\varepsilon_0}(\tau)
\bigr) .
\end{gather}
\endgroup%

From the continuity\index{continuity} of $Y_{\varepsilon_0}(t) - Y_{\varepsilon}(t)$ with
respect to $t$ and definition of $\tau$, it is clear that
$Y_{\varepsilon_0}(\tau) - Y_{\varepsilon}(\tau) = -\theta$ and, as
$0<\varepsilon<\varepsilon_0<\varepsilon_1=C\theta$, where $C\in(0,1]$,
\begin{equation*}
k\bigl(Y_{\varepsilon_0}(\tau) - Y_{\varepsilon}(\tau)\bigr) + k(\varepsilon
_0-\varepsilon) < k(-\theta+C\theta) = k(C-1)\theta\le0.
\end{equation*}

Therefore
\begin{equation*}
\bigl(Y_\varepsilon(\tau) - Y_{\varepsilon_0}(\tau) \bigr)' <
\frac
{k(C-1)\theta}{(Y_{\varepsilon}(\tau)+\varepsilon)(Y_{\varepsilon
_0}(\tau)+\varepsilon_0)} - a\theta<0.
\end{equation*}

Hence, as
\begin{equation*}
Y_\varepsilon(t) - Y_{\varepsilon_0}(t) = \theta+ \bigl(Y_\varepsilon(
\tau ) - Y_{\varepsilon_0}(\tau)\bigr)'(t-\tau) + o(t-\tau), \quad t
\to\tau,
\end{equation*}
there exists such interval $(\tau, \tau_1)\subset(\tau, \tau_0)$ that
for all $t\in(\tau, \tau_1)$:
\begin{equation*}
Y_\varepsilon(t) - Y_{\varepsilon_0}(t) < \theta,
\end{equation*}
which contradicts \eqref{inequality for uniform conv}.

So, for all $t\in(t^*-\delta^*, t^*+\delta^*)$:
\begin{equation*}
Y_\varepsilon\bigl(t^*-\delta^*\bigr) - Y_{\varepsilon_0}\bigl(t^*-\delta^*
\bigr)\le\theta,
\end{equation*}
and hence, as for all $\varepsilon<\varepsilon_0$ and $t\in\mathbb R$
it holds that $Y_{\varepsilon_0}(t) < Y_{\varepsilon}(t) <Y(t)$,
\begin{equation*}
\sup_{t\in(t^*-\delta^*, t^*+\delta^*)} \bigl(Y_\varepsilon(t) - Y_{\varepsilon}(t)
\bigr)\le\theta, \quad\forall\varepsilon<\varepsilon_0.
\end{equation*}

\textbf{Step 3.} In order to prove that
\begin{equation*}
\lim_{t\to0+} Y(t) = Y(0) = Y_0,
\end{equation*}
it is enough to notice that for any $\tilde\varepsilon>0$ there is such
$\tilde t >0$ that for all $\varepsilon<\tilde\varepsilon$: $\Y(t)>\frac
{Y_0}{2}$, $t\in[0,\tilde t]$.

Hence, for each all $\varepsilon<\tilde\varepsilon$ and $t\in[0,\tilde t]$
\begin{equation*}
\fr{t} = \frac{k}{\Y(t)+\varepsilon} \le\frac{k}{\Y(t)} \le\frac{2k}{Y_0}
\end{equation*}
and so
\begin{equation*}
\lim_{\varepsilon\to0}\int_0^{t} \fr{s}ds
= \int_0^{t} \frac{k}{Y(s)}ds,
\end{equation*}
hence, for all $t\in[0,\tilde t]$:
\begin{equation*}
Y(t) = Y_0 + \int_0^{t}
\frac{k}{Y(s)}ds - a\int_0^t Y(s) ds +\sigma
B^H(t).
\end{equation*}

This equation has a unique continuous solution, therefore $Y$ is
continuous on $[0,\tilde t]$.
\end{proof}

\begin{remark}\label{new Y is old Y}
From Theorem \ref{a.e. continuity} it is easy to see that the limit
square root process $Y$ satisfies the equation of the form \eqref{Y}
until the first moment $\tau$ of zero hitting. Indeed, on each compact
set $[0,\tilde t]\subset[0, \tau)$ $\Y$ converges to $Y$ uniformly as
$\varepsilon\to0$ due to Dini's theorem. Hence there is such $\tilde
\varepsilon>0$ that for all $\varepsilon<\tilde\varepsilon$: $\Y
(t)>\frac{\min_{s\in[0,\tilde t]}Y(s)}{2}>0$ for all $t\in[0,\tilde t]$, and, similarly to Step 3 of
Theorem \ref{a.e. continuity}, it can be shown that $Y$ satisfies
equation \eqref{Y} on $[0, \tilde t]$.
\end{remark}

\begin{corollary}
The trajectories of the process $Y = \{Y(t),~ t\ge0\}$ are continuous
a.e. on $\mathbb R_+$ a.s. and therefore are Riemann integrable\index{Riemann integrable} a.s.
\end{corollary}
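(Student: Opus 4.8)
The plan is to show that, outside a single null event, every discontinuity of $Y$ is a zero of $Y$, that the zero set is Lebesgue-null, and then to invoke Lebesgue's criterion for Riemann integrability after noting that $Y$ is bounded on compacts. First I would fix $\omega$ outside the union of the exceptional null sets of Corollary \ref{positiveness}, Lemma \ref{nonnegativity everywhere}, Theorem \ref{a.e. continuity}, Corollary \ref{existence of the limit} and Lemma \ref{fBm moments}; this union is again null. For such $\omega$, Theorem \ref{a.e. continuity} tells us that the set $G:=\{t>0~|~Y(t)>0\}$ is open and that $Y$ is continuous on $\{t\ge0~|~Y(t)>0\}$, hence continuous at every point $t^*$ with $Y(t^*)>0$ (the case $t^*=0$ being covered since $Y(0)=Y_0>0$). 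Consequently, at any point at which $Y$ is discontinuous one must have $Y(t)\le0$, i.e., by Lemma \ref{nonnegativity everywhere}, $Y(t)=0$.

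Next I would use Corollary \ref{positiveness}: for every $n\ge1$ the set $\{t\in[0,n]~|~Y(t)=0\}$ has Lebesgue measure zero, and since $\{t\ge0~|~Y(t)=0\}=\bigcup_{n\ge1}\{t\in[0,n]~|~Y(t)=0\}$ is a countable union of Lebesgue-null sets, it is itself Lebesgue-null. Combined with the previous paragraph, the set of discontinuity points of $Y$ is contained in this Lebesgue-null set, which is exactly the assertion that the trajectories of $Y$ are continuous a.e. on $\mathbb R_+$.

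For Riemann integrability, fix $T>0$. By Corollary \ref{existence of the limit} we have $|Y(t)|\le C_1+C_2\sup_{s\in[0,T]}|\B s|$ for all $t\in[0,T]$, and the supremum is finite by Lemma \ref{fBm moments}, so $Y$ is bounded on $[0,T]$. A bounded function on a compact interval whose set of discontinuities is Lebesgue-null is Riemann integrable (Lebesgue's criterion for Riemann integrability); applying this on $[0,T]$ and letting $T$ be arbitrary yields Riemann integrability of $Y$ on every compact subinterval of $\mathbb R_+$, a.s. The only points needing care are the bookkeeping of the exceptional null sets and the explicit appeal to boundedness (which is what makes Lebesgue's criterion applicable); both are immediate from the results already established, so I do not expect any genuine analytic obstacle.
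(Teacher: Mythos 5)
Your proof is correct and follows essentially the same route as the paper, which simply combines Theorem \ref{a.e. continuity} (continuity on the positivity set) with Corollary \ref{positiveness} (the complement is Lebesgue-null) and then invokes Lebesgue's criterion. Your additional care in verifying boundedness on compacts via Corollary \ref{existence of the limit} and Lemma \ref{fBm moments} is a welcome detail that the paper leaves implicit, but it does not change the argument.
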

\begin{proof}
The claim follows directly from Theorem \ref{a.e. continuity} and
Corollary \ref{positiveness}.
\end{proof}

The set $\{t > 0~ | ~Y(t)>0\}$ is open in the natural topology\index{natural topology} on
$\mathbb R$ a.s., so it can be a.s. represented as the finite or
countable union of non-intersecting \xch{intervals}{intevals}, i.e.
\begin{equation*}
\bigl\{t > 0 ~| ~Y(t)>0\bigr\} = \bigcup_{i=0}^N
(\alpha_i, \beta_{i}), \quad N\in\mathbb N\cup\{\infty\},
\end{equation*}
where $(\alpha_i, \beta_{i})\cap(\alpha_j, \beta_{j}) = \emptyset$,
$i\ne j$.

Moreover, the set $\{t\ge0~|~Y(t) >0\}$ can be a.s. represented as
\begin{equation}
\label{pos set repr} \bigl\{t\ge0~|~Y(t) >0\bigr\} = [\alpha_0,
\beta_0)\cup \Biggl(\bigcup_{i=1}^N
(\alpha_i, \beta_{i}) \Biggr),
\end{equation}
where $\alpha_0=0$, $\beta_0$ is the first moment of zero hitting by
the square root process $Y$, $(\alpha_i, \beta_{i})\cap(\alpha_j, \beta
_{j}) = \emptyset$, $i\ne j$, and $(\alpha_i, \beta_{i})\cap[\alpha_0,
\beta_{0}) = \emptyset$, $i\ne0$.

\begin{theorem}\label{lims and eqs}
Let $(\alpha_i, \beta_i)$, $i\ge1$, be an arbitrary interval from the
representation \eqref{pos set repr}. Then

1) $\lim_{t\to\alpha_i +} Y(t) = 0$, $\lim_{t\to\beta_i -} Y(t) = 0$ a.s.;

2) for any $t\in[\alpha_i, \beta_i]$:
\begin{equation}
\label{equation on compact} Y(t) = \int_{\alpha_i}^t
\frac{k}{Y(s)}ds - a\int_{\alpha_i}^t Y(s) ds + \sigma
\bigl(B^H(t)-B^H(\alpha_i) \bigr) \quad
\text{a.s.}
\end{equation}
\end{theorem}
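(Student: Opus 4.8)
The plan is to handle the two claims in order, using the $\varepsilon$-approximations and the results already established in Sections~2 and~3. For part~1), I would first argue that $Y(\alpha_i) = Y(\beta_i) = 0$ using continuity of $Y$ on the open set $\{Y>0\}$ (Theorem~\ref{a.e. continuity}) together with the nonnegativity from Lemma~\ref{nonnegativity everywhere}: since $(\alpha_i,\beta_i)$ is a maximal interval of positiveness, the endpoints $\alpha_i,\beta_i$ are limits of points where $Y>0$ but are themselves not in $\{Y>0\}$, hence $Y(\alpha_i)=Y(\beta_i)=0$ (here $i\ge 1$, so $\alpha_i>0$ and these are genuine interior endpoints). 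The one-sided limits $\lim_{t\to\alpha_i+}Y(t)=0$ and $\lim_{t\to\beta_i-}Y(t)=0$ then follow from continuity of $Y$ \emph{on} $[\alpha_i,\beta_i]$, which is what part~2) will also require, so I would establish continuity up to the closed endpoints as part of the argument for~2).

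For part~2), fix a compact subinterval $[\alpha_i+\eta, \beta_i-\eta]\subset(\alpha_i,\beta_i)$. On this set $Y$ is continuous and strictly positive, so $\min_{s\in[\alpha_i+\eta,\beta_i-\eta]}Y(s)=:m>0$; by Dini's theorem the monotone convergence $\Y\uparrow Y$ is uniform there, so for $\varepsilon$ small enough $\Y(t)>m/2>0$ for all $t$ in this set, whence $\mathbbm1_{\{\Y(t)>0\}}=1$ and $\fr{t}=\frac{k}{\Y(t)+\varepsilon}\le \frac{2k}{m}$. This bound, together with the pointwise convergence $\Y(t)\to Y(t)$ and Remark~\ref{monotone conv of int} for the $a\int Y$ term, lets me pass to the limit in the integral equation~\eqref{new equation} written between two points $s_1,s_2\in(\alpha_i,\beta_i)$, giving
\[
Y(s_2) = Y(s_1) + \int_{s_1}^{s_2}\frac{k}{Y(s)}ds - a\int_{s_1}^{s_2}Y(s)ds + \sigma\bigl(B^H(s_2)-B^H(s_1)\bigr).
\]
Now I let $s_1\to\alpha_i+$. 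The fractional Brownian increment and the $a\int Y$ term are continuous in $s_1$ by Lemma~\ref{Holder fBm} and integrability of $Y$; the integral $\int_{s_1}^{s_2}\frac{k}{Y(s)}ds$ is monotone nondecreasing as $s_1\downarrow\alpha_i$ and bounded above (by the Fatou-type bound, $\int_{\alpha_i}^{s_2}\frac{k}{Y(s)}ds\le \lowlim_{\varepsilon\to0}\int_{\alpha_i}^{s_2}\fr{s}ds<\infty$ a.s., using the corollary after Lemma~\ref{measurelemma}), hence converges; therefore $\lim_{s_1\to\alpha_i+}Y(s_1)$ exists, and since $Y\ge 0$ and this limit cannot be positive (otherwise $\alpha_i$ would lie in the open set $\{Y>0\}$, contradicting maximality), it equals $0$. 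This simultaneously proves the left endpoint part of~1), gives continuity of $Y$ at $\alpha_i$ from the right, and yields~\eqref{equation on compact} after sending $s_1\to\alpha_i$ and $s_2\to t$ (the latter by the same continuity argument at an interior or right-endpoint point). The symmetric argument at $\beta_i$ finishes~1).

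The main obstacle I expect is justifying that $\int_{\alpha_i}^{t}\frac{k}{Y(s)}ds$ is finite and that one may interchange limit and integral near the endpoint where $Y\to 0$ and the integrand blows up. The bound $\int_{\alpha_i}^{t}\frac{k}{Y(s)}ds\le\lowlim_{\varepsilon\to0}\int_{\alpha_i}^{t}\fr{s}ds$ from Fatou's lemma is the crucial input, but one must check that the right-hand side is indeed a.s. finite on each such interval — this follows from \eqref{limit fraction}, which gives a.s. finiteness of $\lim_{\varepsilon\to0}\int_0^t\fr{s}ds$ for every fixed $t$, combined with positivity of the integrand. Once finiteness is in hand, the passage to the limit in the $\frac{k}{Y}$ integral is by monotone convergence (the integrals $\int_{s_1}^{s_2}\fr{s}ds$ increase as $\varepsilon\downarrow 0$ since $\Y$ is decreasing in $\varepsilon$), which is clean; the only delicate bookkeeping is making sure all the null sets (from Lemma~\ref{Holder fBm}, Lemma~\ref{nonnegativity everywhere}, Lemma~\ref{measurelemma}, and the integrability corollaries) are collected into a single a.s. event on which the whole argument runs.
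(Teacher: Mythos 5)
Your plan for part 2) is essentially the paper's: pass to the limit $\varepsilon\to 0$ on compact subintervals of $(\alpha_i,\beta_i)$ via Dini's theorem and the uniform lower bound on $Y$, then send the left endpoint to $\alpha_i$ using the a.s.\ finiteness of $\int \frac{k}{Y(s)}ds$. The genuine gap is in part 1), at the step where you identify the one-sided limit as zero. You argue that $\lim_{s_1\to\alpha_i+}Y(s_1)=:L$ exists (that part is fine) and then claim $L$ ``cannot be positive (otherwise $\alpha_i$ would lie in the open set $\{Y>0\}$, contradicting maximality).'' This does not follow: membership of $\alpha_i$ in $\{Y>0\}$ is determined by the pointwise value $Y(\alpha_i)$, which equals $0$ by Lemma \ref{nonnegativity everywhere} regardless of what the right-hand limit is. Since $Y$ is only known to be an increasing limit of the continuous processes $Y_\varepsilon$ (hence lower semicontinuous) and continuous on the open set $\{Y>0\}$ by Theorem \ref{a.e. continuity}, nothing in the topology or in the maximality of $(\alpha_i,\beta_i)$ rules out an upward jump, i.e.\ $Y(\alpha_i)=0$ together with $L>0$. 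Excluding exactly this possibility is the real content of part 1), and your proposal assumes it away.

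The paper closes this gap with a quantitative contradiction argument that you would need to supply: if $\varlimsup_{t\to\alpha_i+}Y(t)=x>0$, choose $\tau\in(\alpha_i,\alpha_i+\delta)$ with $Y(\tau)$ near $x$, then $\varepsilon$ small enough that $Y_\varepsilon(\tau)>\frac{x}{2}$; since $Y_\varepsilon(\alpha_i)<0$ and $Y_\varepsilon$ is continuous, there is a last time $\tau_1\in(\alpha_i,\tau)$ with $Y_\varepsilon(\tau_1)=\frac{x}{4}$, and on $[\tau_1,\tau]$ one has $Y_\varepsilon\in(\frac{x}{4},\frac{5x}{4})$, so the drift of $Y_\varepsilon$ is bounded by a constant depending only on $x$, $k$, $a$ (in particular, uniformly in $\varepsilon$), while the fBm increment is bounded by $C(\tau-\tau_1)^{H/2}$ by Lemma \ref{Holder fBm}. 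Hence the forced rise of at least $\frac{x}{4}$ over an interval of length at most $\delta$ is impossible for small $\delta$, giving $x=0$. Without this (or an equivalent) argument, your derivation of \eqref{equation on compact} also breaks down, because you use $\lim_{s_1\to\alpha_i+}Y(s_1)=0$ to eliminate the boundary term when sending $s_1\to\alpha_i$; with the limit only known to exist, you would obtain the equation with an extra unidentified constant $L$ on the right-hand side.
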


\begin{proof}
Let $\varOmega'$ be from Lemma \ref{Holder fBm} and an arbitrary $\omega\in
\varOmega'$ be fixed.

1) Proofs for both left and right ends of the segment are similar, so
we shall give a proof for the left end only.

$Y$ is positive on $(\alpha_i, \beta_i)$, so it is sufficient to prove
that $\varlimsup_{t \to\alpha_i+} Y(t) = 0$.

Assume that $\varlimsup_{t \to\alpha_i+} Y(t) = x>0$. Then for any
$\delta>0$ there exists such $\tau\in(\alpha_i, \alpha_i+\delta)$
that $Y(\tau)\in (\frac{3x}{4}, \frac{5x}{4} )$.

Let $\delta$ and such $\tau\in(\alpha_i, \alpha_i+\delta)$ be fixed.
$\Y(\tau) \uparrow Y(\tau)$ as $\varepsilon\to0$, so there is such
$\varepsilon= \varepsilon(\tau)$ that $\Y(\tau)\in (\frac{x}{2},
\frac{5x}{4} )$. It is clear that $\Y(\alpha_i)<0$, therefore
there is such a moment $\tau_1\in(\alpha_i, \tau)$ that
\begin{equation*}
\tau_1 = \sup \biggl\{t\in(\alpha_i, \tau)~|~\Y(t)=
\frac{x}{4} \biggr\}.
\end{equation*}

From continuity\index{continuity} of $\Y$, $\Y(\tau_1) = \frac{x}{4}$, so $\Y(\tau)-\Y
(\tau_1)>\frac{x}{4}$. On the other hand, from definitions of $\tau$
and $\tau_1$, for all $t\in[\tau_1, \tau]$: $\Y(t) \in (\frac
{x}{4}, \frac{5x}{4} )$. That, together with Lemma \ref{Holder
fBm}, gives:
\begin{equation*}
\begin{gathered} \frac{x}{4} < \Y(\tau) - \Y(
\tau_1)
\\
= \int_{\tau_1}^{\tau} \frac{k}{\Y(s)+\varepsilon}ds - a\int
_{\tau
_1}^{\tau} \Y(s) ds + \sigma \bigl(B^H(
\tau) - B^H(\tau_1) \bigr)
\\
\le \biggl(\frac{2k}{x} + \frac{ax}{4} \biggr) (\tau-
\tau_1) + C(\tau _2-\tau_1)^{H/2},
\end{gathered} %
\end{equation*}
i.e. for any $\delta>0$
\begin{equation*}
0<\frac{x}{4} \le \biggl(\frac{2k}{x} + \frac{ax}{4} \biggr)
(\tau-\tau_1) + C(\tau_2-\tau_1)^{H/2}
\le \biggl(\frac{2k}{x} + \frac{ax}{4} \biggr)\delta+ C
\delta^{H/2},
\end{equation*}
which is not possible.

Therefore
\begin{equation*}
\varlimsup_{t \to\alpha_i+} Y(t) = \varliminf_{t \to\alpha_i+} Y(t) = \lim
_{t \to\alpha_i+}Y(t) = 0.
\end{equation*}

2) From Theorem \ref{a.e. continuity}, $Y$ is continuous on each
segment $[\alpha_i^*, \beta_i^*]\subset[\alpha_i, \beta_i]$, so, due
to Dini's theorem, $\Y$ converges uniformly to $Y$ on $[\alpha_i^*,
\beta_i^*]$ as $\varepsilon\to0$. Moreover, there is such $\delta>0$
such that for all $t\in[\alpha_i^*, \beta_i^*]$: $Y(t)>\delta$,
therefore it is easy to see that $\fr{\cdot}$ converges uniformly to
$\frac{k}{Y(\cdot)}$ on $[\alpha_i^*, \beta_i^*]$ as $\varepsilon\to0$.

Hence, for all $t\in[\alpha_i^*, \beta_i^*]$:
\begin{equation*}
\begin{gathered} \int_{\alpha_i^*}^t
\frac{1}{Y(s)}ds = \lim_{\varepsilon\to0}\int_{\alpha_i^*}^t
\fr{s}ds
\\
= \lim_{\varepsilon\to0} \Biggl(\Y(t)-\Y\bigl(\alpha_i^*
\bigr)+a\int_{\alpha
_i^*}^t \Y(s)ds -\sigma
\bigl(B^H(t)-B^H\bigl(\alpha^*_i\bigr) \bigr)
\Biggr)
\\
= Y(t) - Y\bigl(\alpha_i^*\bigr) + a\int_{\alpha_i^*}^t
Y(s)ds - \sigma \bigl(B^H(t)-B^H\bigl(
\alpha^*_i\bigr) \bigr), \end{gathered} %
\end{equation*}
or
\begin{equation}
\label{alpha star} Y(t) = Y\bigl(\alpha_i^*\bigr)+\int
_{\alpha_i^*}^t \frac{k}{Y(s)}ds - a\int
_{\alpha^*_i}^t Y(s) ds + \sigma \bigl(B^H(t)-B^H
\bigl(\alpha^*_i\bigr) \bigr).
\end{equation}

The right-hand side of \eqref{alpha star} is right continuous with
respect to $\alpha_i^*$ due to the previous clause of proof, i.e.
\begin{equation*}
\begin{gathered} \lim_{\alpha_i^*\to\alpha_i+} Y\bigl(
\alpha_i^*\bigr) = Y(\alpha_i);\qquad\lim
_{\alpha_i^*\to\alpha_i+} \int_{\alpha_i^*}^t
\frac{k}{Y(s)}ds = \int_{\alpha_i}^t
\frac{k}{Y(s)}ds;
\\
\lim_{\alpha_i^*\to\alpha_i+} \int_{\alpha^*_i}^t Y(s) ds
= \int_{\alpha
_i}^t Y(s) ds;
\\
\lim_{\alpha_i^*\to\alpha_i+} \bigl(B^H(t)-B^H\bigl(
\alpha^*_i\bigr) \bigr) = \bigl(B^H(t)-B^H(
\alpha_i) \bigr). \end{gathered} %
\end{equation*}

Due to Lemma \ref{nonnegativity everywhere}, $Y(\alpha_i)=0$, therefore
\eqref{equation on compact} holds for an arbitrary $t\in[\alpha_i, \beta_i)$.

To get the result for $t=\beta_i$, it is sufficient to pass to the
limit as $t\to\beta_i$.
\end{proof}

\begin{remark}\label{eq until zero}
Similarly to Theorem \ref{lims and eqs}, it is easy to prove that
\begin{equation*}
\lim_{t\to\beta_0-} Y(t) = Y(\beta_0) = 0,
\end{equation*}
and therefore, taking into account Remark \ref{new Y is old Y}, for all
$t\in[0, \beta_0]$:
\begin{equation}
\label{Y until the fzh} Y(t) = Y_0 +\int_0^t
\frac{k}{Y(s)}ds - a\int_0^t Y(s)ds + \sigma
B^H (t).
\end{equation}
\end{remark}

\begin{remark}
The choice of $\varepsilon$-approximations may be different. For
example, instead of \eqref{new equation}, it is possible to consider
the equation of the form
\begin{equation*}
\tilde\Y(t) = Y_0 + \int_0^t
\frac{k}{\max \{\tilde\Y(s),
\varepsilon \}}ds - a\int_0^t \tilde\Y(s)ds +
\sigma B^H(t).
\end{equation*}

By following the proofs of the results above, it can be verified that
all of them hold for the resulting limit process $\tilde Y$.
Furthermore, if $k,a>0$, it coincides with $Y$ on $[0,+\infty)$.

Indeed, let $\omega\in\varOmega$ be fixed. Consider the difference
\begin{equation*}
\begin{gathered} \Delta_\varepsilon(t) := \tilde\Y(t) - \Y(t)
\\
= \int_0^t \biggl(\frac{k}{\max \{\tilde\Y(s), \varepsilon \}
}-\fr{s}
\biggr)ds
\\
- a\int_0^t \bigl(\tilde\Y(s)-\Y(s)\bigr)ds.
\end{gathered} %
\end{equation*}

As $\frac{k}{\max \{x, \varepsilon \}}\ge\frac{k}{x\mathbbm
{1}_{\{x>0\}}+\varepsilon}$ for all $x\in\mathbb{R}$, it is easy to see
from Lemma \ref{comparison} and Remark \ref{comparison remark} that
$\Delta_\varepsilon(t) = \tilde\Y(t) - \Y(t) \ge0$. Furthermore,
$\Delta_\varepsilon$ is differentiable on $(0,+\infty)$ and $\Delta
_\varepsilon(0)=0$.

Assume that there is such $\tau>0$ that $\Delta_\varepsilon(\tau) \ge
2\varepsilon$ and denote
\begin{equation*}
\tau_\varepsilon:= \inf\bigl\{t\in(0,\tau)~| ~ \forall s\in(t,\tau]: \Delta
_\varepsilon(s)>\varepsilon\bigr\}.
\end{equation*}

Note that, due to continuity\index{continuity} of $\tilde\Y$ and $\Y$, $\Delta_\varepsilon
(\tau_\varepsilon) = \varepsilon$ and therefore for all $t\in(\tau
_\varepsilon, \tau)$:
\begin{equation*}
\Delta_\varepsilon(t) - \Delta_\varepsilon(\tau_\varepsilon) > 0,
\end{equation*}
so, as $\Delta_\varepsilon$ is differentiable in $\tau_\varepsilon$,
\begin{equation*}
\Delta_\varepsilon' (\tau_\varepsilon) = (
\Delta_\varepsilon)_+' (\tau _\varepsilon) = \lim
_{t\to\tau_\varepsilon+}\frac{\Delta_\varepsilon
(t) - \Delta_\varepsilon(\tau_\varepsilon)}{t-\tau_\varepsilon} \ge0.
\end{equation*}

However, as
\begin{equation*}
\begin{gathered} \max \bigl\{\tilde\Y(\tau_\varepsilon),
\varepsilon \bigr\} = \max \bigl\{ \Y(\tau_\varepsilon)+\varepsilon,
\varepsilon \bigr\} = \max \bigl\{\Y (\tau_\varepsilon), 0 \bigr\} +
\varepsilon
\\
= \Y(\tau_\varepsilon)\mathbbm{1}_{\{\Y(\tau_\varepsilon)>0\}
}+\varepsilon, \end{gathered}
\end{equation*}
it is easy to see that
\begin{equation*}
\begin{gathered} \Delta_\varepsilon' (
\tau_\varepsilon) = \frac{k}{\max \{\tilde\Y
(\tau_\varepsilon), \varepsilon \}}-\fr{\tau_\varepsilon} - a\bigl(
\tilde\Y(\tau_\varepsilon)-\Y(\tau_\varepsilon)\bigr)
\\
=-a\varepsilon< 0. \end{gathered} %
\end{equation*}

The obtained contradiction shows that for all $t\ge0$: $\tilde\Y(t) -
\Y(t)<2\varepsilon$ and therefore, by letting $\varepsilon\to0$, it is
easy to verify that for all $t\ge0$: $\tilde Y(t) = Y(t)$.

Simulations illustrate that the processes indeed coincide (see Fig. \ref
{different Ys}). Euler approximations\index{Euler approximations} of $\Y$ and $\tilde\Y$ on $[0,5]$
were used with $\varepsilon= 0.01$, $H=0.3$, $Y_0=a=k=\sigma=1$. The
mesh\index{mesh} of the partition was $\Delta t = 0.0001$.
\end{remark}

\begin{figure}[h!]
\includegraphics{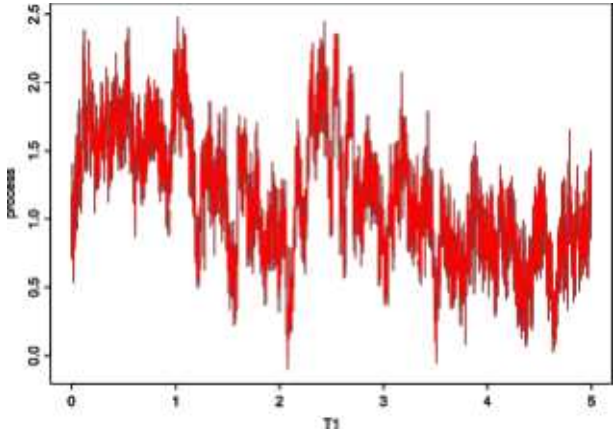}
\caption{Comparison \xch{of}{of of} $\Y$ (black) and $\tilde\Y$ (red),
$\varepsilon=0.01$. Two paths are close to each other, so that they
cannot be distinguished on the figure}\label{different Ys}
\end{figure}

\section{Equation for the square root process $Y$ for $t\ge0$}\label
{section 4}

The equation for $Y(t)$ in case of $t\in[0,\beta_0]$ has already been
obtained in Remark \ref{eq until zero}. In order to get the equation
for an arbitrary $t\in\mathbb R_+$, consider the following procedure.

Let $\mathcal I$ be the set of all non-intersecting open intervals from
representation \eqref{pos set repr}, i.e.
\begin{equation*}
\mathcal I = \bigl\{(\alpha_i, \beta_i),~i=1,\ldots, N
\bigr\}, \quad N\in\mathbb N\cup\{\infty\}.
\end{equation*}

Consider an arbitrary interval $(\alpha, \beta)\in\mathcal{I}$ and
assume that $t\in[\alpha, \beta]$. Then, from Theorem \ref{lims and eqs},
\begin{equation*}
Y(t) = \int_{\alpha}^t \frac{k}{Y(s)}ds - a\int
_{\alpha}^t Y(s) ds + \sigma \bigl(B^H(t)-B^H(
\alpha) \bigr)
\end{equation*}

Consider all such intervals $(\tilde\alpha_j, \tilde\beta_j)\in
\mathcal I$, $j =1,\ldots, M$, $M\in\mathbb N\cup\{\infty\}$, that
$\tilde\alpha_j < \alpha$.

For each $j = 1,\ldots, M$,
\begin{equation*}
0=Y(\tilde\beta_j) = \int_{\tilde\alpha_j}^{\tilde\beta_j}
\frac
{k}{Y(s)}ds - a \int_{\tilde\alpha_j}^{\tilde\beta_j} Y(s) ds +
\sigma \bigl(B^H(\tilde\beta_j) - B^H(\tilde
\alpha_j)\bigr).
\end{equation*}
Moreover,
\begin{equation*}
0=Y(\beta_0) = Y_0 + \int_{0}^{\beta_0}
\frac{k}{Y(s)}ds - a \int_{0}^{\beta_0} Y(s) ds +
\sigma B^H(\beta_0).
\end{equation*}
Therefore
\begin{equation*}
\begin{gathered} Y(t) = Y(\beta_0) + \sum
_{j=1}^M Y(\tilde\beta_j) + Y(t)
\\
= Y_0 + \Biggl(\int_{0}^{\beta_0} \biggl(
\frac{k}{Y(s)} - aY(s) \biggr)ds + \sum_{j=1}^M
\int_{\tilde\alpha_j}^{\tilde\beta_j} \biggl(\frac{k}{Y(s)} - aY(s)
\biggr)ds
\\
+\int_{\alpha}^{t} \biggl(\frac{k}{Y(s)} - aY(s)
\biggr)ds \Biggr)
\\
+\sigma \Biggl( B^H(\beta_0) + \sum
_{j=1}^M \bigl(B^H(\tilde
\beta_j) - B^H(\tilde\alpha_j) \bigr) +
\bigl(B^H(t) - B^H(\alpha) \bigr) \Biggr)
\\
= Y_0 + \int_{[0, \beta_0) \cup (\bigcup_{j=1}^M (\tilde\alpha_j,
\tilde\beta_j) )\cup[\alpha,t)} \biggl(\frac{k}{Y(s)} -
aY(s) \biggr)ds
\\
+ \sigma \Biggl(B^H(\beta_0) + \sum
_{j=1}^M \bigl(B^H(\tilde
\beta_j) - B^H(\tilde\alpha_j) \bigr) +
\bigl(B^H(t) - B^H(\alpha) \bigr) \Biggr) \\
 = Y_0 + \int_0^t
\biggl(\frac{k}{Y(s)} - aY(s) \biggr)ds
\\
+\sigma \Biggl(B^H(\beta_0) + \sum
_{j=1}^M \bigl(B^H(\tilde
\beta_j) - B^H(\tilde\alpha_j) \bigr) +
\bigl(B^H(t) - B^H(\alpha) \bigr) \Biggr). \end{gathered}
\end{equation*}

The question whether $Y$ satisfies the equation of the form \eqref{Y
until the fzh} on $\mathbb R_+$ remains open due to the obscure
structure\index{obscure structure} of its zero set. The simulation studies do not contradict
with that either.

Indeed, consider the process $Z=\{Z(t), t\ge0\}$ that satisfies the
SDE
\begin{equation*}
Z(t) = Y_0 - a\int_0^t Z(s)ds +
\int_0^t \frac{k}{Y(s)}ds +\sigma
B^H(t),
\end{equation*}
where $Y$ is, as usual, the pointwise limit\index{pointwise limit} of $\Y$ and $Y_0$, $k$,
$a$, $\sigma$ are positive constants. By following explicitly the proof
of Proposition A.1 in \cite{CKM} it can be shown that
\begin{equation}
\label{expl Z model} Z(t) = e^{-a t} \Biggl(Y_0 + \int
_0^t \frac{ke^{as}}{Y(s)}ds +\sigma\int
_0^t e^{as}dB^H(s)
\Biggr).
\end{equation}

Now assume that $Y$ satisfies the equation of the form \eqref{Y until
the fzh} for all $t\ge0$. Then $Y$ admits the representation of the
form \eqref{expl Z model}, i.e.
\begin{equation}
\label{check the eq Y} Y(t) = e^{-a t} \Biggl(Y_0 + \int
_0^t \frac{ke^{as}}{Y(s)}ds +\sigma\int
_0^t e^{as}dB^H(s)
\Biggr),
\end{equation}
so the paths of $Y$ and paths obtained by direct simulation of the
right-hand side of \eqref{check the eq Y} must coincide.

In other words, if we simulate the trajectory of $Y$, apply transform
given in the RHS of \eqref{check the eq Y} to it and receive the
trajectory that significantly differs from the initial one, it would be
an evidence that $Y$ does not satisfy the equation of the form \eqref{Y
until the fzh} for all $t\ge0$.

To simulate the left-hand side of \eqref{check the eq Y}, the Euler
approximations\index{Euler approximations} of the process $\Y$ with $\varepsilon= 0.01$ are used.
They are compared to the right-hand side of \eqref{check the eq Y}
simulated explicitly using the same $\Y$. The mesh\index{mesh} of the partition is
$\Delta t = 0.0001$, $T=5$, $H=0.4$, $Y_0 = k = a = \sigma= 1$ (see
Fig. \ref{comparison fig}).

\begin{figure}[h!]
\includegraphics{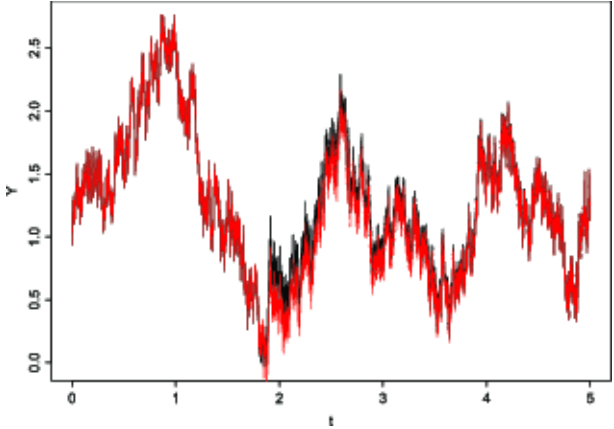}
\caption{Comparison of right-hand and left-hand sides of \eqref{check
the eq Y}}\label{comparison fig}
\end{figure}

As we see, the Euler approximation of $Y$ (of $\Y$ with small
$\varepsilon$, black) indeed almost coincides with its transformation
given by the right-hand side of \eqref{check the eq Y} (red), so no
contradiction is obtained.

\section{Fractional Cox--Ingersoll--Ross process on $\mathbb R_+$}\label
{section 5}

Consider a set of random processes $\{Y_\varepsilon, \varepsilon>0\}$\index{random processes}
which satisfy the equations of the form
\begin{equation*}
\Y(t) = Y_0 +\frac{1}{2}\int_0^t
\biggl(\fr{s}-a\Y(s) \biggr) ds +\frac
{\sigma}{2} B^H(t),
\end{equation*}
driven by the same fractional Brownian motion\index{fractional Brownian motion} with the same parameters
$k$, $a$, $\sigma>0$ and the same starting point $Y_0>0$.

Let the process $Y=\{Y(t), t \in[0,T]\}$ be such that for all $t\ge0$:
\begin{equation}
\label{square root} Y(t) = \lim_{\varepsilon\to0} \Y(t).
\end{equation}

\begin{definition}\label{newCIRdef} The fractional Cox--Ingersoll--Ross
process is the process $X=\{X(t),\allowbreak t\ge0\}$, such that
\begin{equation*}
X(t) = Y^2(t), \quad\forall t\ge0.
\end{equation*}
\end{definition}

Let us show that this definition is indeed a generalization of the
original Definition~\ref{oldCIRdef}.

First, we will require the following definition.
\begin{definition}
Let $\{U_t, t\ge0\}$, $\{V_t, t\ge0\}$ be random processes.\index{random processes}
The pathwise \emph{Stratonovich integral} $\int_0^TU_s\circ dV_s$\index{pathwise Stratonovich integral} is a
pathwise limit of the following sums
\begin{equation*}
\sum_{k=1}^{n} \frac{U_{t_{k}} +U_{t_{k-1}}}{2}
(V_{t_{k}} - V_{t_{k-1}} ),
\end{equation*}
as the mesh\index{mesh} of the partition
$0=t_0<t_1<t_2<\cdots<t_{n-1}<t_n=T$
tends to zero, in case if this limit exists.
\end{definition}

Taking into account the results of previous sections and from Theorem 1
in \cite{MYT}, for all $t\in[0,\beta_0]$:
\begin{equation*}
X(t) = X_0 + \int_0^t \bigl(k -
aX(s)\bigr)ds + \sigma\int_0^t \sqrt{X(s)}
\circ d B^H(s)~a.s.
\end{equation*}

A similar result holds for all $t\in[\alpha_i, \beta_i]$, $i\ge1$.

\begin{theorem}\label{equation for square on intervals}
Let $(\alpha_i, \beta_i)$, $i\ge1$, be an arbitrary interval from the
representation \eqref{pos set repr}\xch{. Consider}{.Consider} the fractional
Cox--Ingersoll--Ross process $X = \{X(t),~t\in[\alpha_i, \beta_i]\}$ from
Definition \ref{newCIRdef}.

Then, for $\alpha_i\le t \le\beta_i$ the process $X$ a.s. satisfies
the SDE
\begin{equation}
\begin{gathered}\label{eq: fCIR for intro} dX(t)=\bigl(k-aX(t)\bigr)dt+\sigma
\sqrt{X(t)}\circ dB^H(t), \quad X(\alpha_i)=0,
\end{gathered} %
\end{equation}
where the integral with respect to the fractional Brownian motion\index{fractional Brownian motion} is
defined as the pathwise Stratonovich integral.\index{pathwise Stratonovich integral}
\end{theorem}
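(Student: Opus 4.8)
The plan is to follow the same strategy already used for the interval $[0,\beta_0]$, where the analogue of \eqref{eq: fCIR for intro} was deduced from Theorem~1 in \cite{MYT}, but now on an arbitrary interval of positiveness $(\alpha_i,\beta_i)$. First I would fix $\omega\in\varOmega'$ from Lemma~\ref{Holder fBm} and recall from Theorem~\ref{lims and eqs} that on $[\alpha_i,\beta_i]$ the square root process $Y$ satisfies
\begin{equation*}
Y(t) = \int_{\alpha_i}^t \biggl(\frac{k}{Y(s)} - aY(s)\biggr)ds + \sigma\bigl(B^H(t)-B^H(\alpha_i)\bigr),
\end{equation*}
with $Y(\alpha_i)=Y(\beta_i)=0$, and that $Y$ is continuous and strictly positive on the open interval $(\alpha_i,\beta_i)$. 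Since $X=Y^2$, the obvious move is to apply a chain rule / integration-by-parts formula to $Y^2$; the key technical point is that the pathwise Stratonovich integral obeys the ordinary Newton--Leibniz calculus, so formally $dX = 2Y\,dY = 2Y\cdot\frac12\bigl(\frac{k}{Y}-aY\bigr)dt + \sigma Y\circ dB^H = (k-aX)dt + \sigma\sqrt{X}\circ dB^H$, which is exactly \eqref{eq: fCIR for intro}.

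To make this rigorous I would first work on a compact subinterval $[\alpha_i^*,\beta_i^*]\subset(\alpha_i,\beta_i)$, where $Y$ is bounded away from zero and is H\"older continuous of order $H-\gamma$ (inheriting the regularity of $B^H$ on the positiveness interval, via the equation; the drift term is even Lipschitz there). On such a subinterval the result of \cite{MYT} (or the same argument: the existence of the Stratonovich integral $\int \sqrt{X}\circ dB^H$ together with the change-of-variables formula for $Y\mapsto Y^2$) applies verbatim, giving
\begin{equation*}
X(t) = X(\alpha_i^*) + \int_{\alpha_i^*}^t \bigl(k-aX(s)\bigr)ds + \sigma\int_{\alpha_i^*}^t \sqrt{X(s)}\circ dB^H(s)
\end{equation*}
for all $t\in[\alpha_i^*,\beta_i^*]$. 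Then I would let $\alpha_i^*\downarrow\alpha_i$ and $\beta_i^*\uparrow\beta_i$ and pass to the limit in each term. The ordinary integrals converge by the continuity and integrability of $Y$ (hence of $X$) established in Sections~\ref{section 2} and~\ref{section 3} together with Theorem~\ref{lims and eqs}(1), which gives $X(\alpha_i^*)\to X(\alpha_i)=0$ and $X(\beta_i^*)\to X(\beta_i)=0$; the fractional Brownian increments converge by continuity of $B^H$.

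The main obstacle is controlling the Stratonovich integral $\int\sqrt{X(s)}\circ dB^H(s)$ near the endpoints $\alpha_i$ and $\beta_i$, where $Y$ (hence the integrand $\sqrt X=Y$, up to constants) degenerates to $0$ and is only H\"older of a small order $H<1/2$; one must check that the defining Riemann--Stratonovich sums still converge and that the integral over $[\alpha_i,\beta_i]$ is the limit of the integrals over $[\alpha_i^*,\beta_i^*]$. I would handle this by using the explicit representation of $Y$ on $[\alpha_i,\beta_i]$ from Theorem~\ref{lims and eqs} to get a quantitative modulus of continuity for $Y$ near the endpoints (the drift contributes a $C^1$ term and the noise a term bounded by $C|t-\alpha_i|^{H-\gamma}$ via Lemma~\ref{Holder fBm}), which shows that $\sqrt{X}=Y$ is itself H\"older continuous on the whole closed interval $[\alpha_i,\beta_i]$ with exponent $H-\gamma$, so that $\sqrt X$ and $B^H$ are jointly regular enough for the pathwise Stratonovich integral to exist on $[\alpha_i,\beta_i]$ and to depend continuously on the endpoints. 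Once this endpoint estimate is in place, the limiting procedure is routine and yields \eqref{eq: fCIR for intro} with $X(\alpha_i)=0$.
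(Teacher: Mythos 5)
Your overall plan (use the equation for $Y$ on $[\alpha_i,\beta_i]$ from Theorem \ref{lims and eqs}, square it, and identify the quadratic cross term as a Stratonovich integral) is the right starting point, but the step on which your whole limiting procedure rests is not valid. You claim that, because $\sqrt{X}=Y$ and $B^H$ are both H\"older continuous of order $H-\gamma$ on the closed interval, they are ``jointly regular enough for the pathwise Stratonovich integral to exist on $[\alpha_i,\beta_i]$ and to depend continuously on the endpoints.'' This is a Young-type criterion, and it requires the sum of the H\"older exponents of integrand and integrator to exceed $1$; here that sum is $2(H-\gamma)<1$, since the whole paper works with $H<1/2$. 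So H\"older regularity alone gives neither the convergence of the symmetric Riemann sums on $[\alpha_i,t]$ nor the continuity of the integral in the endpoint, and the passage $\alpha_i^*\downarrow\alpha_i$ is left unjustified exactly at the point you identified as the main obstacle. (The limit of the subinterval integrals does exist trivially, because each of them equals $\sigma^{-1}(X(t)-X(\alpha_i^*)-\int_{\alpha_i^*}^t(k-aX(s))ds)$, but identifying that limit with the Stratonovich integral over the full interval, as the theorem asserts, is precisely what is missing.)

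The existence of $\int_{\alpha_i}^t\sqrt{X(s)}\circ dB^H(s)$ in this low-regularity regime comes not from regularity but from the specific structure of the integrand: $Y$ is a Riemann-integrable drift term plus $\frac{\sigma}{2}(B^H(\cdot)-B^H(\alpha_i))$. The paper's proof (following Theorem 1 of \cite{MYT}) exploits this directly on $[\alpha_i,t]$, with no subintervals and no endpoint limit: since $X(\alpha_i)=0$, one writes $X(t)=\sum_j\bigl(X(t_j)-X(t_{j-1})\bigr)$ over an arbitrary partition, factors each increment of $Y^2$ as a difference of squares, and expands. The terms involving only the drift and the symmetric products of $B^H$-values against the drift increments converge to $\int_{\alpha_i}^t(k-aX(s))ds$ as ordinary Riemann sums (continuity of $Y$ on $[\alpha_i,\beta_i]$ and finiteness of $\int_{\alpha_i}^t k/Y(s)\,ds$ suffice; no lower bound on $Y$ and no H\"older estimate near $\alpha_i$ are needed). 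Because the left-hand side is independent of the partition, the remaining symmetric sums must then converge as well, and their limit is by definition the pathwise Stratonovich integral $\sigma\int_{\alpha_i}^t\sqrt{X(s)}\circ dB^H(s)$. In short: the integral's existence is a conclusion of the telescoping argument, not a hypothesis you can secure via H\"older continuity; your proposal needs this structural argument (on the whole interval) to close the gap, and once you use it the compact-subinterval approximation becomes unnecessary.
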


\begin{proof}
We shall follow the proof of Theorem 1 from \cite{MYT}.

Let us fix an $\omega\in\varOmega$ and consider an arbitrary $t\in[\alpha
_i, \beta_i]$.

It is clear that
%
\begin{gather}\label{eq: 1} %
X(t) = Y^2(t)\nonumber
\\
= \Biggl(\frac{1}{2}\int_{\alpha_i}^t \biggl(
\frac{k}{Y(s)}-aY(s) \biggr)ds+\frac{\sigma}{2}B^H(t) -
\frac{\sigma}{2}B^H(\alpha_i) \Biggr)^2.
\end{gather}

Consider an arbitrary partition of the interval $[\alpha_i,t]$:
\[
\alpha_i=t_0<t_1<t_2<
\cdots<t_{n-1}<t_n=t.
\]
Taking into account that $X(\alpha_i)=0$ and using \eqref{eq: 1}, we obtain
\begin{align*}
X(t) &= \sum_{j=1}^{n}
\bigl(X(t_j) - X(t_{j-1}) \bigr)
\\
&= \sum_{j=1}^n \Biggl( \Biggl[
\frac{1}{2}\int_{\alpha_i}^{t_j} \biggl(
\frac
{k}{Y(s)}-aY(s) \biggr)ds+\frac{\sigma}{2}B^H(t_j)
- \frac{\sigma
}{2}B^H(\alpha_i) \Biggr]^2
\\
&\quad- \Biggl[\frac{1}{2}\int_{\alpha_i}^{t_{j-1}}
\biggl(\frac
{k}{Y(s)}-aY(s) \biggr)ds+\frac{\sigma}{2}B^H(t_{j-1})
- \frac{\sigma
}{2}B^H(\alpha_i) \Biggr]^2
\Biggr)
\end{align*}
Factoring each summand as the difference of squares, we get:
\begin{align*}
X(t) &=\sum_{j=1}^n \Biggl[
\frac{1}{2} \Biggl(\int_{\alpha_i}^{t_{j}} \biggl(
\frac{k}{Y(s)}-aY(s) \biggr)ds+\int_{\alpha_i}^{t_{j-1}}
\biggl(\frac
{k}{Y(s)}-aY(s) \biggr)ds \Biggr)
\\
&\quad+\frac{\sigma}{2} \bigl(B_{t_j}^H +
B_{t_{j-1}}^H \bigr) - \sigma B^H(
\alpha_i) \Biggr]
\\
&\qquad\times \Biggl[\frac{1}{2}\int_{t_{j-1}}^{t_{j}}
\biggl(\frac
{k}{Y(s)}-aY(s) \biggr)ds + \frac{\sigma}{2}
\bigl(B^H(t_j) - B^H(t_{j-1})
\bigr) \Biggr].
\end{align*}
Expanding the brackets in the last expression, we obtain:
%
\begin{gather}\label{integral sums 2} %
X(t) = \frac{1}{4}\sum
_{j=1}^n \Biggl(\int_{\alpha_i}^{t_{j}}
\biggl(\frac
{k}{Y(s)}-aY(s) \biggr)ds\nonumber
\\
+\int_{\alpha_i}^{t_{j-1}} \biggl(\frac{k}{Y(s)}-aY(s)
\biggr)ds \Biggr)\int_{t_{j-1}}^{t_{j}} \biggl(
\frac{k}{Y(s)}-aY(s) \biggr)ds\nonumber
\\
+\frac{\sigma}{4}\sum_{j=1}^n
\bigl(B_{t_j}^H + B_{t_{j-1}}^H \bigr)\int
_{t_{j-1}}^{t_{j}} \biggl(\frac{k}{Y(s)}-aY(s) \biggr)ds\nonumber
\\
-\frac{\sigma}{2} B^H(\alpha_i) \sum
_{j=1}^n \Biggl(\int_{t_{j-1}}^{t_{j}}
\biggl(\frac{k}{Y(s)}-aY(s) \biggr)ds \Biggr)\nonumber \\
 + \frac{\sigma}{4}\sum
_{j=1}^n \Biggl(\int_{\alpha_i}^{t_{j}}
\biggl(\frac
{k}{Y(s)}-aY(s) \biggr)ds+\int_{\alpha_i}^{t_{j-1}}
\biggl(\frac
{k}{Y(s)}-aY(s) \biggr)ds \Biggr)\nonumber
\\
\times \bigl(B^H(t_j) - B^H(t_{j-1})
\bigr)\nonumber
\\
+ \frac{\sigma^2}{4}\sum_{j=1}^n
\bigl(B^H({t_j}) - B^H({t_{j-1}})
\bigr) \bigl(B^H({t_j}) + B^H({t_{j-1}})
\bigr)\nonumber
\\
-\frac{\sigma^2}{2} B^H(\alpha_i) \sum
_{j=1}^n \bigl( \bigl(B^H(t_j)
- B^H(t_{j-1}) \bigr) \bigr).
\end{gather} %

Let the mesh\index{mesh} $\Delta t$ of the partition tend to zero. The first three summands
%
\begin{gather}\label{lebesgue sums 2} %
 \frac{1}{4}\sum
_{j=1}^n \Biggl(\int_{\alpha_i}^{t_{j}}
\biggl(\frac
{k}{Y(s)}-aY(s) \biggr)ds+\int_{\alpha_i}^{t_{j-1}}
\biggl(\frac
{k}{Y(s)}-aY(s) \biggr)ds \Biggr)\nonumber
\\
\times\int_{t_{j-1}}^{t_{j}} \biggl(\frac{k}{Y(s)}-aY(s)
\biggr)ds\nonumber
\\
+\frac{\sigma}{4}\sum_{j=1}^n
\bigl(B_{t_j}^H + B_{t_{j-1}}^H \bigr)\int
_{t_{j-1}}^{t_{j}} \biggl(\frac{k}{Y(s)}-aY(s) \biggr)ds\nonumber
\\
-\frac{\sigma}{2} B^H(\alpha_i) \sum
_{j=1}^n \Biggl(\int_{t_{j-1}}^{t_{j}}
\biggl(\frac{k}{Y(s)}-aY(s) \biggr)ds \Biggr)\nonumber
\\
\rightarrow\int_{\alpha_i}^t \biggl(
\frac{k}{Y(s)}-aY(s) \biggr)\nonumber
\\
\times \Biggl(\frac{1}{2}\int_{\alpha_i}^s
\biggl(\frac{k}{Y(u)}-aY(u) \biggr)du+\frac{\sigma}{2}B^H(s)-
\frac{\sigma}{2} B^H(\alpha_i) \Biggr)ds\nonumber
\\
=\int_{\alpha_i}^t \biggl(\frac{k}{Y(s)}-aY(s)
\biggr)Y(s)ds =\int_{\alpha
_i}^t \bigl(k-aX(s) \bigr)ds,
\quad\Delta t\rightarrow0,
\end{gather}
%
and the last three summands
%
%
\begin{gather}\label{stratonovich sums 2} %
\frac{\sigma}{4}\sum_{j=1}^n
\Biggl(\int_{\alpha_i}^{t_{j}} \biggl(\frac
{k}{Y(s)}-aY(s)
\biggr)ds+\int_{\alpha_i}^{t_{j-1}} \biggl(
\frac
{k}{Y(s)}-aY(s) \biggr)ds \Biggr)\nonumber
\\
\times \bigl(B^H(t_j) - B^H(t_{j-1})
\bigr)\nonumber
\\
+ \frac{\sigma^2}{4}\sum_{j=1}^n
\bigl(B^H({t_j}) - B^H({t_{j-1}})
\bigr) \bigl(B^H({t_j}) + B^H({t_{j-1}})
\bigr)\nonumber
\\
-\frac{\sigma^2}{2} B^H(\alpha_i) \sum
_{j=1}^n \bigl( \bigl(B^H(t_j)
- B^H(t_{j-1}) \bigr) \bigr)\nonumber \\
\rightarrow\sigma\int
_{\alpha_i}^t \Biggl(\frac{1}{2}\int
_{\alpha
_i}^s \biggl(\frac{k}{Y(u)}-aY(u) \biggr)du\nonumber
\\
+\frac{\sigma}{2}B^H(s) - \frac{\sigma}{2} B^H(
\alpha_i) \Biggr)\circ dB^H(s)\nonumber
\\
=\sigma\int_{\alpha_i}^t Y(s) \circ dB^H(s)
= \sigma\int_{\alpha_i}^t \sqrt{X(s)}\circ
dB^H(s),\quad\Delta t\rightarrow0.
\end{gather} %

Note that the left-hand side of \eqref{integral sums 2} does not depend
on the partition and the limit in \eqref{lebesgue sums 2} exists as the
pathwise Riemann integral,\index{pathwise Riemann integral} therefore the corresponding pathwise
Stratonovich integral\index{pathwise Stratonovich integral} exists and the passage to the limit in \eqref
{stratonovich sums 2} is correct.\vadjust{\eject}

Thus, the process $X$ satisfies the SDE of the form
\begin{equation}
\begin{gathered}\label{stratonovich equation 2} X(t) = \int_{\alpha_i}^t
\bigl(k-aX(s) \bigr)ds+\sigma\int_{\alpha
_i}^t
\sqrt{X(s)}\circ dB^H(s), \quad t\in[\alpha_i,
\beta_i], \end{gathered} %
\end{equation}
where $\int_{\alpha_i}^t\sqrt{X(s)}\circ dB^H(s)$ is a pathwise
Stratonovich integral.\index{pathwise Stratonovich integral}
\end{proof}

Finally, similarly to Section \ref{section 4}, the next result follows
from Theorem \ref{equation for square on intervals}.

Consider an arbitrary $(\alpha,\beta)\in\mathcal{I}$, where $\mathcal
{I}$ is a set of all open intervals from representation \eqref{pos set
repr} (see Section \ref{section 4}). Let $\beta_0$ be the first moment
of zero hitting by $Y$ and $(\tilde\alpha_j, \tilde\beta_j)\in\mathcal
{I}$, $j =1,\ldots, M$, $M\in\mathbb{N}\cup\{\infty\}$, are all such
intervals that $\tilde\alpha_j<\alpha$.

\begin{theorem}
Let $t\in[\alpha, \beta]$. Then
\begin{equation*}
\begin{gathered} X(t) = X(0) + \int_0^t
\bigl(k - a X(s)\bigr) ds
\\
+ \sigma\int_{[0, \beta_0) \cup (\bigcup_{j=1}^M (\tilde\alpha_j,
\tilde\beta_j) )\cup[\alpha,t)} \sqrt{X(s)} \circ dB^H (s),
\end{gathered} %
\end{equation*}
where
\begin{equation*}
\begin{gathered} \int_{[0, \beta_0) \cup (\bigcup_{j=1}^M (\tilde\alpha_j, \tilde
\beta_j) )\cup[\alpha,t)} \sqrt{X(s)} \circ
dB^H (s)
\\
:= \int_0^{\beta_0} \sqrt{X(s)} \circ
dB^H (s) + \sum_{j=1}^M \int
_{\tilde\alpha_j}^{\tilde\beta_j} \sqrt{X(s)} \circ dB^H (s)
\\
+ \int_{\alpha}^{t} \sqrt{X(s)} \circ dB^H
(s). \end{gathered} %
\end{equation*}
\end{theorem}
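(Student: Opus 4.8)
The plan is to obtain the equation on $(\alpha,\beta)$ by summing the equations already available on $[0,\beta_0]$ and on every interval of positiveness situated to the left of $\alpha$, in exact parallel with the computation for the square root process $Y$ carried out in Section~\ref{section 4}. The genuinely analytic work — existence of the pathwise Stratonovich integrals and the equation on a single interval — has already been done in Theorem~\ref{equation for square on intervals}; what remains is assembly.

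First I would collect three identities. From the equation for $X$ on $[0,\beta_0]$ stated just above Theorem~\ref{equation for square on intervals}, together with $X(\beta_0)=Y^2(\beta_0)=0$ (Remark~\ref{eq until zero}),
\[
0 = X(0) + \int_0^{\beta_0}\bigl(k-aX(s)\bigr)ds + \sigma\int_0^{\beta_0}\sqrt{X(s)}\circ dB^H(s).
\]
For each $j=1,\dots,M$, Theorem~\ref{equation for square on intervals} applied to $(\tilde\alpha_j,\tilde\beta_j)$ at its right endpoint, combined with $X(\tilde\alpha_j)=X(\tilde\beta_j)=0$ (Lemma~\ref{nonnegativity everywhere} and Theorem~\ref{lims and eqs}), gives
\[
0 = \int_{\tilde\alpha_j}^{\tilde\beta_j}\bigl(k-aX(s)\bigr)ds + \sigma\int_{\tilde\alpha_j}^{\tilde\beta_j}\sqrt{X(s)}\circ dB^H(s).
\]
Finally, Theorem~\ref{equation for square on intervals} on $(\alpha,\beta)$ itself yields, for $t\in[\alpha,\beta]$,
\[
X(t) = \int_\alpha^t\bigl(k-aX(s)\bigr)ds + \sigma\int_\alpha^t\sqrt{X(s)}\circ dB^H(s).
\]
Note that $\beta_0\le\alpha$ and $\tilde\beta_j\le\alpha$ automatically, since these intervals are pairwise disjoint and lie to the left of $(\alpha,\beta)$.

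Then I would simply add these identities. The left-hand sides collapse to $X(t)$; on the right, the drift terms assemble, by countable additivity of the Lebesgue integral, into $\int_E(k-aX(s))ds$ with $E:=[0,\beta_0)\cup\bigl(\bigcup_{j=1}^M(\tilde\alpha_j,\tilde\beta_j)\bigr)\cup[\alpha,t)$, and the noise terms into $\sigma$ times the sum of the pathwise Stratonovich integrals over the components of $E$, i.e.\ into $\sigma\int_E\sqrt{X(s)}\circ dB^H(s)$ in the sense fixed by the statement. To replace $\int_E(k-aX(s))ds$ by $\int_0^t(k-aX(s))ds$ I would note that $[0,t]\setminus E$ is contained, apart from countably many endpoints, in $\{s\ge0\mid Y(s)=0\}$, which is Lebesgue-null by Corollary~\ref{positiveness}; for the Stratonovich integral no analogous reduction is possible (nor claimed), since $B^H$ has unbounded variation — which is precisely why that integral is \emph{defined} as a sum over intervals of positiveness.

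The one point genuinely needing care is the convergence of the series when $M=\infty$. Here I would use the a.s.\ integrability of $X=Y^2$ on $[0,t]$ — a consequence of the uniform moment bound of Theorem~\ref{upper boundary} (valid, with obvious modifications, for the scaled process of Section~\ref{section 5}) and the a.s.\ finiteness of $\sup_{s\in[0,T]}|\B{s}|$ — so that $\sum_{j=1}^M\int_{\tilde\alpha_j}^{\tilde\beta_j}\lvert k-aX(s)\rvert ds\le\int_0^t\lvert k-aX(s)\rvert ds<\infty$; and, by the $j$-th identity above, $\sigma\int_{\tilde\alpha_j}^{\tilde\beta_j}\sqrt{X(s)}\circ dB^H(s)=-\int_{\tilde\alpha_j}^{\tilde\beta_j}(k-aX(s))ds$, so the series of Stratonovich integrals converges absolutely as well and may be summed in any order. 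Beyond this the argument is pure bookkeeping, and the main obstacle is only organizing the decomposition of $[0,t]$ into its (null) zero set and its intervals of positiveness correctly — nothing analytically deep.
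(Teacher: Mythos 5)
Your proposal is correct and follows essentially the same route the paper intends: the paper gives no separate proof, remarking only that the result follows from Theorem~\ref{equation for square on intervals} by the same assembly over $[0,\beta_0)$, the intervals $(\tilde\alpha_j,\tilde\beta_j)$ and $[\alpha,t)$ that was carried out for $Y$ in Section~\ref{section 4}, using $X(\beta_0)=X(\tilde\beta_j)=0$ and the fact that the zero set of $Y$ is Lebesgue-null. Your additional check of absolute convergence of the series when $M=\infty$ is a welcome refinement of that sketch, not a departure from it.
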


\begin{acknowledgement}[title={Acknowledgments}]
We are deeply grateful to anonymous referees whose valuable comments
helped us to improve the manuscript significantly.
\end{acknowledgement}

%

\end{document}